\documentclass[12pt]{amsart}
\usepackage[latin1]{inputenc}
\usepackage{amsxtra,amssymb,amsthm,amsmath,amscd,mathrsfs}
\usepackage[all]{xy}
\paperheight=258mm
\paperwidth=252mm
\textheight=230mm  \topmargin=-5mm
\textwidth=150mm
\oddsidemargin=5mm
\evensidemargin=5mm
\rightmargin=40mm

\def \C {{\mathbb C}}

\def \F {{\mathbb F}}

\def \N {{\mathbb N}}

\def \Q {{\mathbb Q}}

\def \Z {{\mathbb Z}}

\def\tr{\mathop{\rm tr}\nolimits}
\def\frakp{{\mathfrak{p}}}

\def \d {\,{\rm d}}
\def\re{{\Re e\,}}

\def\Li{\hbox{{\rm Li}}}

\def\leq{\leqslant}
\def\geq{\geqslant}
\def\le{\leqslant}
\def\ge{\geqslant}

\theoremstyle{plain}
\newtheorem{theorem}{Theorem}[section]
\newtheorem{lemma}[theorem]{Lemma}
\newtheorem{corollary}[theorem]{Corollary}
\newtheorem{conjecture}[theorem]{Conjecture}
\newtheorem{hypothesis}[theorem]{Hypothesis}

\numberwithin{equation}{section}

\begin{document}

\title[Almost prime values of the order of elliptic curves]
{Almost prime values of the order of elliptic curves over finite fields}
\author{C. David \& J. Wu}
\address{
Department of Mathematics and Statistics,
Concordia University,
1455 de Mai\-sonneuve West,
Montr\'eal, QC, H3G 1M8, Canada}
\email{cdavid@mathstat.concordia.ca}
\address{Institut Elie Cartan Nancy (IECN)
\\
Nancy-Universit\'e, CNRS, INRIA
\\
Boulevard des Aiguillettes, B.P. 239
\\
54506 Van\-d\oe uvre-l\`es-Nancy
\\
France}
\email{wujie@iecn.u-nancy.fr}
\address{School of Mathematical Sciences
\\
Shandong Normal University
\\
Jinan, Shandong 250100
\\
China}

\date{\today}

\begin{abstract}
Let $E$ be an elliptic curve over $\Q$ without complex
multiplication, and which is not isogenous to a curve with
non-trivial rational torsion. For each prime $p$ of good reduction,
let $|E(\F_p)|$ be the order of the group of points of the reduced
curve over $\F_p$. We prove in this paper that, under the GRH, there
are at least $2.778 C_E^{\rm twin} x / (\log{x})^2$ primes $p$ such
that $|E(\F_p)|$ has at most 8 prime factors, counted with
multiplicity. This improves previous results of Steuding \& Weng
\cite{SW05} and Murty \& Miri \cite{MM01}. This is also the first
result where the dependence on the conjectural constant $C_E^{\rm
twin}$ appearing in the twin prime conjecture for elliptic curves
(also known as Koblitz's conjecture) is made explicit. This is
achieved by sieving a slightly different sequence than the one of
\cite{SW05} and \cite{MM01}. By sieving the same sequence and using
Selberg's linear sieve, we can also improve the constant of Zywina \cite{zywina-largesieve}
appearing in the upper bound for the number of primes $p$ such that $|E(\F_p)|$ is prime.
Finally, we remark that our results still hold
under an hypothesis weaker than the GRH.
\end{abstract}
\subjclass[2000]{11N36, 14H52}
\keywords{linear sieve with weights, group order of elliptic curves}
\maketitle

\addtocounter{footnote}{1}

\section{Introduction}

\smallskip

The twin prime conjecture is one of the oldest questions in number
theory, and can be stated as: there is an infinity of prime numbers
$p$ such that $p+2$ is also prime. The best known result is due to
Chen \cite{C73}, who proved that
\begin{equation}\label{Chen}
\big|\big\{p\le x : p+2=P_2\big\}\big|\ge 0.335 \frac{C_{\rm twin}x}{(\log x)^2}
\end{equation}
for $x\ge x_0$,
where $P_r$ denotes an integer having at most $r$ prime factors counted with multiplicity and
$$
C_{\rm twin}:=2\prod_{\ell>2}\bigg(1-\frac{1}{(\ell-1)^2}\bigg).
$$
Here and in the sequel, the letters $p$ and $\ell$ denote prime
numbers. There are many generalisations of the twin prime
conjecture, and in particular, an  analogous conjecture for elliptic
curves was formulated by Koblitz \cite{Ko88}. Let $E$ be an elliptic
curve defined over $\Q$ and denote by $E(\F_p)$ the reduction of $E$
modulo $p$. Koblitz \cite{Ko88} conjectured that if $E$ is not
$\Q$-isogenous to an elliptic curve with non-trivial $\Q$-torsion
subgroup then
\begin{equation}\label{KoblitzC}
\pi_E^{\rm twin}(x)
:= \big|\big\{p\le x : |E(\F_p)|\;\hbox{is prime}\big\}\big|
\sim \frac{C_E^{\rm twin}x}{(\log x)^2}
\end{equation}
with a non-zero constant  $C_E^{\rm twin}$ as $x\to\infty$,
where the constant $C_E^{\rm twin}$ can be defined as an Euler product as the twin prime constant
$C_{\rm twin}$.
It will be described explicitly in
Section \ref{koblitzconstant}. This conjecture has theoretical
relevance to elliptic curve cryptosystems based the discrete
logarithm problem in the group $E(\F_p)$.

As the twin prime conjecture, Koblitz's conjecture is still open,
but was shown to be true on average over all elliptic curves
\cite{BCD08}. One can also apply sieve methods to get lower bounds
for the number of primes $p$ such that $|E(\F_p)|$ is almost-prime.
It is necessary to distinguish two cases: when $E$ has complex
multiplication (CM) or not.
In the first case, Iwaniec \& Jim\'enez Urroz \cite{IU07, IU08}
have obtained an analogue of Chen's theorem
(\ref{Chen}). In the non-CM case, all results assume the generalized
Riemann hypothesis (GRH) for Dedekind zeta-functions  of some number fields.
The first result of this type is due to Miri \& Murty
\cite{MM01}, who proved by using Selberg's sieve \cite{Bom87} that
\begin{equation}\label{MM}
\big|\big\{p\le x : |E(\F_p)|=P_{16}\big\}\big|\gg \frac{x}{(\log
x)^2}
\end{equation}
for $x\ge x_0(E)$, where the implicit constant depends on the
elliptic curve $E$. Recently Steuding and Weng \cite{SW05} have
improved 16 to 9, by using Richert's logarithmic weighted sieve
\cite{HR74} and some improvements to the error term of the explicit
Chebotarev  Density Theorem due to Serre \cite{Serre81} and M.-R. Murty,
V.-K. Murty \& Saradha \cite{MMS88}.

We prove in this paper a better result under a weaker hypothesis,
namely we replace the GRH by the $\theta$-hypothesis which states
that there are no zeroes with $\re s> \theta$ for Dedekind
zeta-functions and Artin $L$-functions. This is  stated in Section
\ref{CDT} as Hypothesis \ref{theta}. We also write explicitly the
constant in terms of the twin prime constant $C_E^{\rm twin}$ by
modifying slightly the set to sieve.

\begin{theorem}\label{CTEllipticCurve1}
Let $E$ be an elliptic curve over $\Q$ without complex
multiplication which is not $\Q$-isogenous to an elliptic curve with
non-trivial $\Q$-torsion. Assuming Hypothesis \ref{theta} for any
$1/2\le \theta<1$, we have
\begin{equation}\label{1}
\big|\big\{p\le x : (|E(\F_p)|, M_E)=1, \; |E(\F_p)|=P_r\big\}\big|
\ge \frac{1.323}{1-\theta}\frac{C_E^{\rm twin}x}{(\log x)^2}
\end{equation}
for $x\ge x_0(E)$, where
$M_E$ is an integer depending on $E$
(which will be described explicitly in Section \ref{koblitzconstant}) and
\begin{equation}\label{defr}
r = r(\theta) := \bigg[\frac{18+2\theta}{5(1-\theta)}\bigg] + 1.
\end{equation}
Here $[t]$ denotes the integral part of $t$.
\end{theorem}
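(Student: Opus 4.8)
The plan is to prove Theorem~\ref{CTEllipticCurve1} by a weighted linear sieve, the arithmetic being supplied by the distribution of $|E(\F_p)|$ in residue classes, which under Hypothesis~\ref{theta} is governed by the effective Chebotarev density theorem. First I fix the sequence to be sifted: rather than sieving $\big(|E(\F_p)|\big)_{p\le x}$ directly, as in \cite{MM01} and \cite{SW05}, I take
\[
\mathcal A:=\big\{\,|E(\F_p)|\ :\ p\le x\ \text{of good reduction},\ (|E(\F_p)|,M_E)=1\,\big\},
\]
and sift it only by the primes $\ell\nmid M_E$. Discarding the $p$ for which $|E(\F_p)|$ is divisible by one of the finitely many ``bad'' primes dividing $M_E$ is exactly what makes the surviving local densities the generic ones, so that the Euler product produced by the sieve coincides — once the universal numerical constant of the sieve is factored out — with Koblitz's constant $C_E^{\rm twin}$ as defined in Section~\ref{koblitzconstant}; it is also this modification that gains a prime factor over \cite{SW05}. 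A Chebotarev count over the finitely many division fields $\Q(E[\ell])$, $\ell\mid M_E$, gives $|\mathcal A|=X+o(x/\log x)$ with $X$ an explicit positive multiple of $\pi(x)$ depending on $M_E$.

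Next I record the sieve density. For squarefree $d$ coprime to $M_E$ put $\mathcal A_d:=\{a\in\mathcal A:d\mid a\}$ and let $\delta(d)$ be the limiting value of $|\mathcal A_d|/|\mathcal A|$. Since $E$ has no complex multiplication, Serre's open image theorem gives $\Gal(\Q(E[\ell])/\Q)=\mathrm{GL}_2(\F_\ell)$ for all but finitely many $\ell$; hence $\delta$ is multiplicative and
\[
\delta(\ell)=\frac{\#\{g\in\mathrm{GL}_2(\F_\ell):\tr g=1+\det g\}}{|\mathrm{GL}_2(\F_\ell)|}=\frac1\ell+O\!\Big(\frac1{\ell^{2}}\Big),
\]
so $\sum_{\ell<z}\delta(\ell)\log\ell=\log z+O(1)$: the sieve has dimension $1$, the usual linear-sieve hypothesis on $\prod_{w\le\ell<z}\bigl(1-\delta(\ell)\bigr)^{-1}$ holds, and $V(z):=\prod_{\ell<z,\,\ell\nmid M_E}\bigl(1-\delta(\ell)\bigr)\sim e^{-\gamma}\mathfrak S_E/\log z$. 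By the choice of $\mathcal A$ and the definition of $C_E^{\rm twin}$, the product $X\,V(z)$ matches $C_E^{\rm twin}\,x/(\log x)^2$ up to the numerical factors thrown up by the sieve — this is the whole point of the modification.

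The crux is the level of distribution. Writing $|\mathcal A_d|=\delta(d)X+r_d$, the condition $d\mid |E(\F_p)|=p+1-a_p$ is the Chebotarev condition $\tr\bar\rho_d(\mathrm{Frob}_p)=1+\det\bar\rho_d(\mathrm{Frob}_p)$ in $\Gal(\Q(E[d])/\Q)\subseteq\mathrm{GL}_2(\Z/d\Z)$, a group of order $\ll d^{4}$ with $\log|\mathrm{disc}\,\Q(E[d])|\ll d^{4}\log(dN_E)$ ($N_E$ the conductor of $E$). Inserting this into the effective Chebotarev density theorem under Hypothesis~\ref{theta} (Section~\ref{CDT}), and using the sharp forms of the estimates of Serre \cite{Serre81} and Murty--Murty--Saradha \cite{MMS88} to control the dependence on the degree and the discriminant uniformly in $d$, one obtains $r_d\ll d^{3}x^{\theta}\log(dN_Ex)$, whence $\sum_{d\le D}\mu^{2}(d)\,|r_d|\ll D^{4}x^{\theta}\log x$. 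Thus $\mathcal A$ has level of distribution $D=x^{(1-\theta)/4-\eps}$ for every fixed $\eps>0$, and the error terms contribute $o(X/\log x)$. I expect this step to be the main obstacle: getting a level of the right quality out of Hypothesis~\ref{theta} uniformly over the \emph{composite} moduli $d\le D$ is where a crude effective Chebotarev is not enough and the precise shapes of the Serre and Murty--Murty--Saradha bounds are essential.

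Finally I apply the weighted linear sieve of Halberstam--Richert \cite{HR74}, with Richert's logarithmic weights, to $\mathcal A$ at level $D=x^{(1-\theta)/4-\eps}$ ($\eps>0$ small enough). Every element satisfies $a=|E(\F_p)|\le x+1+2\sqrt{x}$, so the governing parameter is $s:=\log x/\log D=\tfrac{4}{1-\theta}+o(1)$. Feeding the density of the second paragraph and the level of the third into the weighted-sieve inequality and optimising over its internal parameters (the sifting limit $z$, the length of the logarithmic weights, and the Richert cut-off), one obtains, for $x\ge x_0(E)$,
\[
\big|\{a\in\mathcal A:a=P_r\}\big|\ \ge\ \frac{1.323}{1-\theta}\,\frac{C_E^{\rm twin}x}{(\log x)^{2}},
\]
with $r=r(\theta)$ as in \eqref{defr} — the smallest admissible value once $s=4/(1-\theta)$ — and the constant $1.323$ coming from the value of the linear-sieve function $f$ together with the weight contribution at the optimal choice of parameters. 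Since $(|E(\F_p)|,M_E)=1$ for every $p$ counted, this is precisely \eqref{1}.
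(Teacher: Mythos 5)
Your overall architecture — sieve the modified sequence with primes $\ell\nmid M_E$, apply effective Chebotarev under the $\theta$-hypothesis to get a level of distribution, feed the result into a weighted one-dimensional sieve — is the right one, and the observation that sieving $\{|E(\F_p)|:(|E(\F_p)|,M_E)=1\}$ is what makes the twin-prime constant come out correctly matches the paper's key modification. However, there is a concrete gap at the level-of-distribution step that prevents the proposal from delivering the theorem as stated.

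You write $r_d\ll d^{3}x^{\theta}\log(dN_Ex)$ and deduce level $D=x^{(1-\theta)/4-\eps}$. But $d^{3}$ is the \emph{unimproved} Chebotarev bound (cf.\ Theorem~\ref{CDT-notimproved}): it comes from the factor $|C(d)|\ll d^{3}$ in Theorem~\ref{Chebotarev}(ii). The entire point of invoking the Serre and Murty--Murty--Saradha refinements — which you correctly cite but do not actually exploit — is to replace $|C|$ by (essentially) $|C|^{1/2}$ by first inducing from the Borel subgroup $B(\ell)$ (Theorem~\ref{theoreminduced}) and then applying Theorem~\ref{theoremartin} to the abelian quotient $B(\ell)/U(\ell)$. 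Done properly this yields $r_d\ll_E d^{3/2}x^{\theta}\log(dx)$ (Theorem~\ref{CDT-improved}(i)), hence $\sum_{d\le D}\mu^2(d)\,3^{\omega(d)}|r_d|\ll D^{5/2+\eps}x^{\theta}$, i.e.\ level $D=x^{2(1-\theta)/5-\eps}$ — strictly larger than $x^{(1-\theta)/4-\eps}$.

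This discrepancy is fatal for the stated value of $r$: the admissible $r$ is determined by the requirement that $\max_{a\in\mathscr A}a\le D^{rU+V}$, i.e.\ $\xi(rU+V)\ge 1$ with $a\ll x$. With $U=\frac58$, $V=\frac14$ and $\xi=\frac{2(1-\theta)}{5}$ this gives exactly $r\ge\frac{18+2\theta}{5(1-\theta)}$, hence \eqref{defr}. With your smaller $\xi=\frac{1-\theta}{4}$ and the same weights you would instead need $r\ge\frac{30+2\theta}{5(1-\theta)}$, which at $\theta=\frac12$ forces $r\ge 13$ rather than $r=8$, and the constant $1.323$ would not appear either. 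So to recover the theorem you must actually carry out the Borel/unipotent induction argument and obtain the $d^{3/2}$ error term; the rest of your plan (modified sequence to match $C_E^{\rm twin}$, dimension-one sieve, Hasse bound to control $\max a$) is sound. One further minor remark: the paper uses Greaves' weighted sieve (\cite{HR85}, Theorem~A with $E=V$, $T=U$) rather than Richert's logarithmic weights from \cite{HR74}, but it explicitly notes that for these parameters the two coincide in the main term, so that choice is not the source of the difficulty.
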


Since $(18+2\theta)/(5-5\theta)<8$ if and only if $\theta<11/21$, we
immediately obtain the following result, which improves, under a
weaker hypothesis, the result of Steuding \& Weng mentioned above.

\begin{corollary}\label{CTEllipticCurve1b}
Let $E$ be an elliptic curve over $\Q$ without complex
multiplication which is not $\Q$-isogenous to an elliptic curve with
non-trivial $\Q$-torsion. Assuming Hypothesis \ref{theta} for any
$1/2\le \theta<11/21$, we have
\begin{equation}\label{1b}
\big|\big\{p\le x : (|E(\F_p)|, M_E)=1, \; |E(\F_p)|=P_8\big\}\big|
\ge 2.778\frac{C_E^{\rm twin}x}{(\log x)^2}
\end{equation}
for $x\ge x_0(E)$.
In particular (\ref{1b}) holds if we assume GRH.
\end{corollary}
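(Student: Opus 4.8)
The plan is to obtain the corollary from Theorem \ref{CTEllipticCurve1} by taking $\theta$ as large as the hypothesis allows. The key observations are that the constant $1.323/(1-\theta)$ in (\ref{1}) is increasing in $\theta$, while $r(\theta)$ stays equal to $8$ throughout the interval $[1/2,11/21)$. First I would record the elementary equivalence cited just before the corollary: multiplying out, $(18+2\theta)/(5-5\theta)<8$ iff $18+2\theta<40(1-\theta)$ iff $42\theta<22$ iff $\theta<11/21$. Hence for every $\theta\in[1/2,11/21)$ we have $0\le (18+2\theta)/(5(1-\theta))<8$, so
\[
r(\theta)=\left[\frac{18+2\theta}{5(1-\theta)}\right]+1\le 7+1=8 .
\]

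Next I would use monotonicity on both sides. On the analytic side, $\lim_{\theta\to(11/21)^-}1.323/(1-\theta)=1.323\cdot\frac{21}{10}=2.7783\ldots>2.778$, so there is $\eta>0$ with $1.323/(1-\theta)\ge 2.778$ for all $\theta\in[11/21-\eta,\,11/21)$. On the arithmetic side, Hypothesis \ref{theta} becomes weaker as $\theta$ grows --- the requirement ``no zeros with $\re s>\theta$'' only loses strength --- so if it is assumed for some $\theta_1\in[1/2,11/21)$, it holds a fortiori for $\theta:=\max\{\theta_1,\,11/21-\eta/2\}$, which lies in $[11/21-\eta,\,11/21)\subseteq[1/2,1)$. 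Applying Theorem \ref{CTEllipticCurve1} with this $\theta$ gives
\[
\big|\big\{p\le x : (|E(\F_p)|, M_E)=1,\, |E(\F_p)|=P_{r(\theta)}\big\}\big|
\ge \frac{1.323}{1-\theta}\,\frac{C_E^{\rm twin}x}{(\log x)^2}
\]
for $x\ge x_0(E)$, and since $1.323/(1-\theta)\ge 2.778$ the right-hand side is at least $2.778\,C_E^{\rm twin}x/(\log x)^2$.

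To conclude, since $r(\theta)\le 8$, an integer with at most $r(\theta)$ prime factors counted with multiplicity has at most $8$, so $|E(\F_p)|=P_{r(\theta)}$ forces $|E(\F_p)|=P_8$; the set in the display above is therefore contained in $\{p\le x:(|E(\F_p)|, M_E)=1,\, |E(\F_p)|=P_8\}$ and the lower bound passes to the latter, which is (\ref{1b}). For the final assertion, GRH for the relevant Dedekind zeta-functions and Artin $L$-functions is precisely Hypothesis \ref{theta} with $\theta=1/2\in[1/2,11/21)$, so the corollary applies. There is no real obstacle here; the only subtlety is that one must not simply set $\theta=1/2$, which would give merely the constant $1.323\cdot 2=2.646$ --- the value $2.778$ is squeezed out by pushing $\theta$ up to just below $11/21$, which is legitimate precisely because the $\theta$-hypothesis is monotone and $r(\theta)$ does not jump to $9$ before $\theta$ reaches $11/21$.
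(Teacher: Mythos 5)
Your proof is correct and follows the paper's own approach: apply Theorem~\ref{CTEllipticCurve1} and observe that $(18+2\theta)/(5-5\theta)<8$ exactly when $\theta<11/21$, so $r(\theta)=8$ throughout $[1/2,11/21)$. The paper presents the corollary as immediate, whereas you usefully make explicit the one genuinely needed observation it leaves tacit: Hypothesis~\ref{theta} is monotone (it for $\theta_1$ implies it for all $\theta\ge\theta_1$), which is precisely what lets one push $\theta$ up to just below $11/21$ and extract $1.323\cdot\frac{21}{10}=2.7783\ldots\ge 2.778$, rather than the weaker $1.323\cdot 2=2.646$ that naively setting $\theta=1/2$ under GRH would yield.
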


Of course, Theorem \ref{CTEllipticCurve1} and Corollary
\ref{CTEllipticCurve1b} imply the same lower bound for
$$\big|\big\{p\le x \; : \; |E(\F_p)|=P_8\big\}\big|$$
since we are getting a lower bound for a smaller set.
We will see in Section \ref{koblitzconstant} that
it is natural to count primes such that $(|E(\F_p)|, M_E)=1$ when sieving to get the
right constant in Theorem \ref{CTEllipticCurve1}.

\smallskip

Upper bounds for $\pi_E^{\rm twin}(x)$ were first studied by
Cojocaru  who showed in \cite{Co05} that $\pi_E^{\rm twin}(x)\ll
x/(\log x)^2$ by using Selberg's linear sieve under the GRH. The
implicit constant depends on the conductor of $E$, but the exact
dependency was not worked out. Very recently, Zywina
\cite{zywina-largesieve} applied an abstract form the large sieve to
obtain that
\begin{equation}\label{ZywinaUB}
\pi_E^{\rm twin}(x)
\le \{22+o(1)\} \frac{C_E^{\rm twin}x}{(\log x)^2}
\end{equation}
as $x\to\infty$. His result applies to a more general form of
Koblitz's conjecture, where the elliptic curve $E$ can be defined
over any number field, and can have rational torsion on an isogenous
curve.

The second aim of this paper is to show that Selberg's linear sieve
allows us to obtain the correct twin prime constant $C_E^{\rm twin}$
with a better constant factor than (\ref{ZywinaUB}) in the case of
elliptic curves over $\Q$ with no rational torsion.

\begin{theorem}\label{Upperbound}
Under the condition of Theorem \ref{CTEllipticCurve1},
for any $\varepsilon>0$ we have
$$
\pi_E^{\rm twin}(x)
\le \bigg(\frac{5}{1-\theta}+\varepsilon\bigg)\frac{C_E^{\rm twin}x}{(\log x)^2}
$$
for $x\ge x_0(E, \varepsilon)$.
\end{theorem}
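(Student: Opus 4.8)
The plan is to run, on the \emph{same} sequence underlying Theorem~\ref{CTEllipticCurve1}, Selberg's $\Lambda^2$ upper bound sieve in dimension one, and to read the constant off its leading term. Write $n_p:=|E(\F_p)|=p+1-a_p$; by Hasse, $(\sqrt p-1)^2\le n_p\le(\sqrt p+1)^2$. Let $\mathcal A:=\{n_p:p\le x,\ p\nmid N_E\}$ (counted with multiplicity), $P(z):=\prod_{\ell<z}\ell$, and let $S(\mathcal A,z)$ be the number of $a\in\mathcal A$ with $(a,P(z))=1$. If $n_p$ is prime and $n_p\ge z$ then $(n_p,P(z))=1$, while $n_p<z$ forces $p\le(1+\sqrt z)^2$; hence
\[
\pi_E^{\rm twin}(x)\ \le\ S(\mathcal A,z)+O(z)\qquad\text{for every }2\le z\le x .
\]

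Next I would verify the sieve axioms. For squarefree $d$ coprime to $N_E$ and to the finitely many primes $\ell$ at which $\overline\rho_{E,\ell}$ is not surjective, the condition $d\mid n_p$ says that $\mathrm{Frob}_p$, viewed inside $\Gal(\Q(E[d])/\Q)\cong GL_2(\Z/d\Z)$, lies in the conjugacy-stable set $C_d=\prod_{\ell\mid d}C_\ell$, where $C_\ell\subset GL_2(\F_\ell)$ is the set of matrices having $1$ as an eigenvalue. A direct count gives $|C_\ell|=\ell^3-2\ell$, so $\delta(\ell):=|C_\ell|/|GL_2(\F_\ell)|=(\ell^2-2)/\big((\ell-1)^2(\ell+1)\big)$, which I extend multiplicatively; thus $|\mathcal A_d|=\delta(d)\,\pi(x)+r_d$ with $r_d$ the Chebotarev remainder for $\Q(E[d])$. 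Since $\ell\,\delta(\ell)=1+O(1/\ell)$ this is a linear (dimension-one) sieve problem, and by the computation of Section~\ref{koblitzconstant} together with Mertens' theorem, $\prod_{\ell<z}(1-\delta(\ell))\sim e^{-\gamma}C_E^{\rm twin}/\log z$, where $C_E^{\rm twin}=\prod_\ell(1-\delta(\ell))/(1-1/\ell)$ --- the finitely many bad $\ell$ contributing their own corrected local factors.

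The essential input is the level of distribution. The fields $\Q(E[d])$ have degree $\ll d^4$ and $\log$-discriminant $\ll_E d^4\log d$, while $|C_d|\le d^3$; feeding this into the effective Chebotarev density theorem under Hypothesis~\ref{theta}, in the refined form of Serre~\cite{Serre81} and Murty--Murty--Saradha~\cite{MMS88} (this is exactly what Section~\ref{CDT} provides), one gets, for every fixed small $\eta>0$,
\[
\sum_{\substack{d\le D\\ \mu^2(d)=1}}3^{\omega(d)}\,|r_d|\ \ll_E\ x^{1-\eta'},\qquad D:=x^{\frac{2(1-\theta)}{5}-\eta},
\]
for some $\eta'=\eta'(\eta)>0$ (here $\omega(d)$ denotes the number of prime divisors of $d$). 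This is the admissible level already underlying Theorem~\ref{CTEllipticCurve1}; establishing it --- controlling $|C_d|$, the degree, and the discriminant of $\Q(E[d])$ uniformly in $d$ so that the $\theta$-hypothesis genuinely delivers the exponent $\tfrac{2(1-\theta)}{5}$ --- is the one serious point, everything else being formal.

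Finally I would apply Selberg's upper bound sieve with sifting level $z=\sqrt D$ and optimal weights supported on $d\le\sqrt D$:
\[
S(\mathcal A,\sqrt D)\ \le\ \frac{\pi(x)}{G_D(\sqrt D)}+\sum_{\substack{d\le D\\ \mu^2(d)=1}}3^{\omega(d)}|r_d|,\qquad
G_D(\sqrt D)=\sum_{\substack{d\le\sqrt D\\ \mu^2(d)=1}}\ \prod_{\ell\mid d}\frac{\delta(\ell)}{1-\delta(\ell)} .
\]
Because $\delta(\ell)/(1-\delta(\ell))=\ell^{-1}+O(\ell^{-2})$ and $\prod_\ell\big(1+\tfrac{\delta(\ell)}{1-\delta(\ell)}\big)\big(1-\tfrac1\ell\big)=\prod_\ell\tfrac{1-1/\ell}{1-\delta(\ell)}=\big(C_E^{\rm twin}\big)^{-1}$, the standard mean-value estimate gives $G_D(\sqrt D)\sim\big(C_E^{\rm twin}\big)^{-1}\log\sqrt D=\log D/(2C_E^{\rm twin})$. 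Since the remainder term and $O(z)=O(x^{(1-\theta)/5})$ are both $o\big(x/(\log x)^2\big)$, and $\pi(x)\sim x/\log x$ with $\log D=\big(\tfrac{2(1-\theta)}{5}-\eta\big)\log x$, collecting the three displays gives
\[
\pi_E^{\rm twin}(x)\ \le\ \frac{2C_E^{\rm twin}}{\log D}\cdot\frac{x}{\log x}\,(1+o(1))\ =\ \frac{2}{\tfrac{2(1-\theta)}{5}-\eta}\cdot\frac{C_E^{\rm twin}x}{(\log x)^2}\,(1+o(1)) .
\]
Given $\varepsilon>0$, choosing $\eta$ small enough and then $x\ge x_0(E,\varepsilon)$ makes the right-hand side at most $\big(\tfrac{5}{1-\theta}+\varepsilon\big)\tfrac{C_E^{\rm twin}x}{(\log x)^2}$, which is the assertion. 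The sieve contributes only the clean factor $2/\log D$; it is the matching $\log D=\tfrac{2(1-\theta)}{5}\log x$ coming from the level-of-distribution step that produces the constant $5/(1-\theta)$ --- replacing the less efficient large sieve used in \cite{zywina-largesieve} by Selberg's sieve is exactly what gains over the constant $22$ there.
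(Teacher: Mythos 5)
Your Selberg‐sieve framework (sift $\{|E(\F_p)|:p\le x\}$, level $D=x^{2(1-\theta)/5-\eta}$, read off $F(2)=e^\gamma$ from the leading term, get $5/(1-\theta)$) is the right mechanism and matches the paper's at the level of the exponent. But there is a genuine gap in how you arrive at the constant $C_E^{\rm twin}$, and it is precisely the point the paper takes pains to handle.

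You extend $\delta(\ell)=|C(\ell)|/|GL_2(\F_\ell)|$ multiplicatively and then write $C_E^{\rm twin}=\prod_\ell(1-\delta(\ell))/(1-1/\ell)$, absorbing the finitely many non‑surjective primes with the phrase ``contributing their own corrected local factors.'' This is not the definition used in the paper (see (\ref{defCEtrwin})), and the discrepancy is exactly Zywina's correction discussed in Section~\ref{koblitzconstant}: the events $\ell\mid|E(\F_p)|$ for $\ell\mid M_E$ are \emph{not} independent, the fields $\Q(E[\ell])$ are never linearly disjoint for all $\ell$, and the probability at $M_E$ is the single quantity $1-|\Omega(M_E)|/|G(M_E)|$, not the product $\prod_{\ell\mid M_E}(1-\delta(\ell))$. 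Consequently the sieve density $w(d)/d$ you need is not multiplicative once $d$ is allowed to have prime factors dividing $M_E$, so either (a) you must exclude such $\ell$ from the sifting set, in which case $V(z)$ and hence $G_D(\sqrt D)$ no longer produce the full $C_E^{\rm twin}$; or (b) you must keep them in, and then $(\Omega_1)$ and the multiplicativity hypothesis of Selberg's sieve fail. The paper avoids both horns by sieving the modified sequence ${\mathscr A}=\{|E(\F_p)|:p\le x,\ (|E(\F_p)|,M_E)=1\}$ over ${\mathscr P}=\{\ell:\ell\nmid M_E\}$, taking $X=(1-|\Omega(M_E)|/|G(M_E)|)\Li(x)$. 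The ``missing'' block of local factors is baked into $X$, the density is genuinely multiplicative on ${\mathscr P}$, and (\ref{V1}) then reconstitutes $C_E^{\rm twin}$ exactly. Finally, the pre‑sieve from $\pi_E^{\rm twin}(x)$ to $S({\mathscr A},{\mathscr P},D^{1/2})$ costs only $O(D^{1/2})$, via the observation following (\ref{Compare}) that $|E(\F_p)|$ prime and $(|E(\F_p)|,M_E)>1$ forces $p\le 16M_E$. Without this re‑sieving step your derivation produces a constant that is not the $C_E^{\rm twin}$ of (\ref{defCEtrwin}), so the statement as written is not proved.
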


Then, assuming GRH, Theorem \ref{Upperbound} allows us to improve
the constant in (\ref{ZywinaUB}) from 22 to 10.

\medskip

{\bf Acknowledgments}.
The second author wishes to thank l'Institut \'Elie Cartan de Nancy-Universit\'{e}
for hospitality and support
during the preparation of this article.

\vskip 10mm

\section{Koblitz's Conjecture}
\label{koblitzconstant}

Let $E$ be an elliptic curve over $\Q$ without complex
multiplication with conductor $N_E$, and let $L_n := \Q(E[n])$ be
the field extension obtained from $\Q$ by adding the coordinates
of the points of $n$-torsion to $\Q$. This is a Galois extension of
$\Q$, and in all this paper, we denote $$G(n) =
\mbox{Gal}(L_n/\Q).$$ Since $E[n](\bar{\Q}) \simeq \Z/n \Z \times \Z/n \Z$,
choosing a basis for the $n$-torsion and looking at the
action of the Galois automorphisms on the $n$-torsion, we get an
injective homomorphism
$$
\rho_n : G(n) \hookrightarrow \mbox{GL}_2(\Z/n \Z).
$$
If $p \nmid nN_E$, then $p$ is unramified in $L_n/\Q.$ Let $p$ be an
unramified prime, and let $\sigma_p$ be the Artin symbol of $L_n/\Q$
at the prime $p$.  For such a prime $p$, $\rho_{n}(\sigma_p)$ is a
conjugacy class of matrices of $\mbox{GL}_2(\Z/n \Z)$. Since the
Frobenius endomorphism $(x,y) \mapsto (x^p, y^p)$ of $E$ over $\F_p$
satisfies the polynomial $x^2 - a_p x + p$ where $a_p$ is defined by
the relation
\begin{equation}\label{defap}
|E(\F_p)|=p+1-a_p,
\end{equation}
it is not difficult to see that
$$
\tr(\rho_{n}(\sigma_p)) \equiv a_p\,({\rm mod}\,n) \qquad{\rm
and}\qquad \det(\rho_{n}(\sigma_p)) \equiv p\,({\rm mod}\,n).
$$

It was shown by Serre \cite{Serre72} that the image of the Galois
representations $\rho_n$ are as big as possible, and there exists a
positive integer $M_E$ such that
\begin{eqnarray}
\label{serre1} &&\mbox{If $(n, M_E)=1$, then $G(n) =
\mbox{GL}_2(\Z/n \Z)$;}
\\
\label{serre2} &&\mbox{If $(n, M_E)=(n,m)=1$, then $G(mn) \simeq
G(m) \times G(n)$.}
\end{eqnarray}

For any integer $n$, let $$C(n) = \left\{ g \in G(n) \;:\;
\det(g)+1-\tr(g)\equiv 0\,({\rm mod}\,n) \right\}.$$ The original
Koblitz constant was defined in terms of the local probabilities for
the event $\ell \nmid p+1-a_p(E)$, which can  be evaluated by
counting matrices $g$ in $\mbox{GL}_2(\Z / \ell \Z)$. More
precisely, for each prime $\ell$, the correcting probability factor
is the quotient
\begin{eqnarray}
\label{eulerfactors} E(\ell) = \frac{\displaystyle{1
-|\C(\ell)|/|G(\ell)|}}{\displaystyle {1 - 1/\ell}}
\end{eqnarray}
where the numerator is the probability that $p+1-a_p(E)$ is not
divisible by $\ell$ and the denominator is the probability that a
random integer is not divisible by $\ell$. If
$G(\ell)=\mbox{GL}_2(\Z/\ell\Z)$, which happens for all but finitely
many primes by (\ref{serre1}), then
\begin{eqnarray*}
E(\ell) = \frac{\displaystyle{1
-|\C(\ell)|/|G(\ell)|}}{\displaystyle {1 - 1/\ell}} &=& 1 -
\frac{\ell^2-\ell-1}{(\ell-1)^3 (\ell+1)}.
\end{eqnarray*}
The constant $C_E^{\rm twin}$ of \cite{Ko88} was defined as the
product over all primes $\ell$ of the Euler factors $E(\ell)$. In
\cite{zywina}, Zywina made the observation that the probabilities
are not multiplicative, as the events are not independent: the
fields $\Q(E[\ell])$ are never disjoint for all primes $\ell$, as
observed by Serre in \cite{Serre72}. For any integer $m$, let
$$\Omega(m) = \left\{ g \in G(m):  (\det(g)+1-\tr(g),m) \neq 1  \right\}.$$
According to the refinement of \cite{zywina}, the probability factor
at $M_E$ is defined as $$\frac{1 -
|\Omega(M_E)|/|G(M_E)|}{\prod_{\ell\mid M_E} (1-1/\ell)}.$$
The twin prime constant $C_E^{\rm twin}$ is then defined as
\begin{equation}\label{defCEtrwin}
C_E^{\rm twin}
:=  \frac{1 - |\Omega(M_E)|/|G(M_E)|}
{\prod_{\ell\mid M_E} (1-1/\ell)} \prod_{\ell \nmid M_E}
\bigg(1-\frac{\ell^2-\ell-1}{(\ell-1)^3(\ell+1)}\bigg).
\end{equation}
We also remark that if we assume Koblitz's conjecture, it follows
immediately that
\begin{equation}\label{KoblitzCModified}
\big|\big\{p\le x : (|E(\F_p)|, M_E)=1, \; |E(\F_p)|\;\hbox{is
prime}\big\}\big| \sim \frac{C_E^{\rm twin}x}{(\log x)^2}
\end{equation}
as $x\to\infty$. It is easy to see that (\ref{KoblitzC}) and
(\ref{KoblitzCModified}) are equivalent, as we can write
\begin{equation}\label{Compare}
\sum_{\substack{p\le x\\ |E(\F_p)|\,{\rm is\;prime}}} 1
= \sum_{\substack{p\le x\\ |E(\F_p)|\,{\rm is\;prime}\\ (|E(\F_p)|, M_E)=1}} 1
+ \sum_{\substack{p\le x\\ |E(\F_p)|\,{\rm is\;prime}\\ (|E(\F_p)|, M_E)>1}} 1.
\end{equation}
The condition that $|E(\F_p)|\;{\rm is\;prime}$ and $(|E(\F_p)|,
M_E)>1$ implies that $|E(\F_p)|=p'$ for some prime $p'$ which
divides $M_E$. On the other hand, the relation (\ref{defap}) and
Hasse's bound $|a_p|\leq 2\sqrt{p}$ allow us to deduce that
$|E(\F_p)|\ge p+1-2\sqrt{p}\ge p/16$. Thus $p\le 16p'\le 16M_E$.
This shows that the second sum on the right-hand side of
(\ref{Compare}) is bounded by $16M_E$.

In this paper, we sieve the sequence
\begin{equation}\label{Amodified}
\{|E(\F_p)| : p\le x, \; (|E(\F_p)|, M_E)=1\}
\end{equation}
instead of the sequence
\begin{equation}\label{A}
\{|E(\F_p)| : p\le x\}
\end{equation}
suggested by (\ref{KoblitzC}) as in \cite{MMS88, SW05, SW05bis}.
This will allow us to  obtain the correct twin prime constant
$C_E^{\rm twin}$ in our theorem.

We will need later the fact that
\begin{eqnarray}
\label{ProbME} {1 - {\displaystyle\frac{|\Omega(M_E)|}{|G(M_E)|}}} =
\sum_{d \mid M_E} \mu(d) \frac{|C(d)|}{|G(d)|}. \end{eqnarray} This
can be proven by using the Chebotarev Density Theorem in the
extension $L_{M_E}/\Q$. We are using here only the density result of
the Chebotarev Density Theorem, and we refer the reader to Section
\ref{CDT} for versions of the Cheboratev Density Theorem with an
explicit error term that will be needed to perform the sieve. Since
$1 -  |\Omega(M_E)|/|G(M_E)|$ is the proportion of matrices in $g
\in G(M_E)$ with $(\det{g}+1-\tr{g}, M_E)=1$, we have that
\begin{align*}
\left( 1 -  \frac{|\Omega(M_E)|}{|G(M_E)|} \right)
\pi(x)
&\sim \sum_{\substack{p \leq x\\ (|E(\F_p)|, M_E)=1}} 1 \\ &=
\sum_{d \mid M_E} \mu(d) \sum_{\substack{p \leq x\\ d \mid |E(\F_p)|}} 1
\\
&\sim \pi(x) \sum_{d \mid M_E} \mu(d) \frac{|C(d)|}{|G(d)|},
\end{align*}
as $x\to\infty$.

\section{Chebotarev Density Theorem}
\label{CDT}

We write in this section an explicit Chebotarev Density Theorem
associated with the Galois extensions of $\Q$ obtained by adding the
coordinates of the points of $n$-torsion to $\Q$.

We first need some notation and definitions. In all this section,
let $L/K$ be a finite Galois extension of number fields with Galois
group $G$, and let $C$ be a union of conjucacy classes in $G$. Let
$n_K$ and $n_L$ be the degrees of $K$ and $L$ over $\Q$, and $d_K$
and $d_L$ their absolute discriminant. Let
$$
M(L/K) := |G| d_K^{1/n_K} \prod_{p} p,
$$
where the product is over the rational primes $p$ which lie below
the ramified primes of $L/K$. Let $\pi_C(x, L/K)$ be the number of
prime ideals $\frakp \in K$ such that $N\frakp \leq x$ which are
unramified in $L/K$ and with $\sigma_\frakp \in C$, where
$\sigma_\frakp$ is the Artin symbol at the prime ideal $\frakp$.

The following Theorem is an effective version of the Chebotarev
Density Theorem due to Lagarias and Odlyzko \cite{LaOd}, with a
slight refinement due to Serre \cite{Serre81}.

\begin{theorem}[Effective Chebotarev Density Theorem]\label{Chebotarev}
{\rm (i)}
Let $\beta$ be the exceptional zero of the Dedekind
zeta function of $L$ (if any). Then, for all $x$ such that $\log{x} \gg n_L (\log{d_L})^2$,
$$\pi_C(x, L/K) = \frac{|C|}{|G|} \Li(x) + O \left( \frac{|C|}{|G|}
\Li(x^\beta) + |\tilde{C}| x \exp\left\{-c n_L^{-1/2} \log^{1/2}{x}\right\}
\right),$$ where $c$ is a positive absolute constant, and $|\tilde{C}|$ is
the number of conjugacy classes in $C$.
\par
{\rm (ii)}
Assuming the GRH for the Dedekind zeta function of
the number field $L$, we have that
$$
\pi_C(x, L/K) = \frac{|C|}{|G|} \Li(x) + O \left( x^{1/2} |C| n_K
\log{\big(M(L/K)x\big)} \right).
$$
\end{theorem}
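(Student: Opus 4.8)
The statement is the classical effective Chebotarev theorem of Lagarias and Odlyzko, with Serre's refinement replacing $|C|$ by $|\tilde C|$ in the unconditional error, so the plan is to reconstruct that proof, organised in four stages. First I would expand the indicator of $C$ on $G$ in irreducible characters: since $C$ is a union of conjugacy classes, $1_C=\sum_\chi\langle 1_C,\chi\rangle\chi$ with $\langle 1_C,1\rangle=|C|/|G|$ and $|\langle 1_C,\chi\rangle|\le|C|\chi(1)/|G|$ for $\chi\ne 1$. Setting $\psi_C(x,L/K):=\sum_{N\frakp^m\le x,\ \frakp\ \mathrm{unramified}}\Lambda(\frakp^m)\,1_C(\sigma_\frakp^m)$, the defining property of the Artin symbol gives $\psi_C(x,L/K)=\sum_\chi\langle 1_C,\chi\rangle\,\psi(x,\chi)$, where $\psi(x,\chi)$ is the summatory function of the Dirichlet coefficients of $-L'(s,\chi,L/K)/L(s,\chi,L/K)$. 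Thus everything reduces to an asymptotic for $\psi(x,\chi)$ with error uniform in $L$.

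Since $L(s,\chi,L/K)$ need not be entire, I would next invoke Brauer induction to write $\chi=\sum_i a_i\,\mathrm{Ind}_{H_i}^G\psi_i$ with $a_i\in\Z$ and $\psi_i$ one-dimensional, so that $L(s,\chi,L/K)=\prod_i L(s,\psi_i,L/L^{H_i})^{a_i}$ is a ratio of Hecke $L$-functions over intermediate fields. This yields meromorphic continuation, the functional equation, the factorisation $\zeta_L(s)=\zeta_K(s)\prod_{\chi\ne 1}L(s,\chi,L/K)^{\chi(1)}$, and, through the conductor--discriminant formula, bounds on the analytic conductors in terms of $d_L$, $n_L$ and the rational primes ramifying in $L/K$, all packaged into $M(L/K)$. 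These feed the Riemann--von Mangoldt count: the number of zeros of $L(s,\chi,L/K)$ with imaginary part in $[T,T+1]$ is $\ll\chi(1)(\log M(L/K)+n_K\log(|T|+2))$, and after resumming the character expansion the total is controlled by the zero count of $\zeta_L$, namely $\ll n_L\log(d_L(|T|+2)^{n_L})$ per unit interval.

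Then I would apply a truncated Perron formula at height $T$ to each $\psi(x,\chi)$ and shift the contour past $\re s=1$. The pole at $s=1$ occurs only for $\chi=1$, and after weighting by $\langle 1_C,\chi\rangle$ produces the main term $(|C|/|G|)\Li(x)$; each nontrivial zero $\rho$ contributes $-x^\rho/\rho$, the truncation error being governed by the zero counts above. Under GRH every $\rho$ has $\re\rho=1/2$, so $\sum_\rho x^\rho/\rho\ll x^{1/2}\sum_\rho 1/|\rho|$, and optimising $T$ then resumming yields exactly the error $O(x^{1/2}|C|n_K\log(M(L/K)x))$ of part (ii). Unconditionally I would use Landau's zero-free region $\re s>1-c/(n_L\log(d_L(|t|+2)^{n_L}))$ and isolate the at most one real exceptional zero $\beta$ of $\zeta_L$: $\beta$, if present, contributes $(|C|/|G|)\Li(x^\beta)$, while the zero-free region together with the zero count gives $O(|\tilde C|\,x\exp(-cn_L^{-1/2}\log^{1/2}x))$, the exponent $n_L^{-1/2}$ arising from balancing $T$ against $x$ and the passage from $|C|$ to $|\tilde C|$ being Serre's refinement of the Lagarias--Odlyzko bookkeeping. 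Finally, discarding the $m\ge2$ prime powers from $\psi_C$ (a contribution dominated by the stated errors) and passing from $\psi_C$ to $\pi_C$ by partial summation --- which turns the main term into $\Li(x)$ without worsening the error --- completes the argument.

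The main obstacle throughout is uniformity in the field $L$. The two genuinely hard points are the holomorphy and growth of the Artin $L$-functions, which only Brauer induction and the conductor--discriminant formula supply, and which is exactly what forces $d_L$, $n_L$ and the ramified primes (that is, $M(L/K)$) into the error terms; and the exceptional zero $\beta$ in the unconditional statement, which present methods cannot exclude and so must be carried explicitly --- its $\Li(x^\beta)$ term being precisely the gap between part (i) and what GRH yields in part (ii). Extracting the correct power $n_L^{-1/2}$ in the unconditional exponent, through the interplay of the zero-free region, the zero count, and the choice of truncation height, is the delicate quantitative step.
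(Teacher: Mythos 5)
The paper does not prove this theorem --- it is imported verbatim from Lagarias--Odlyzko \cite{LaOd} with Serre's refinement \cite{Serre81}, so there is no in-paper argument to compare against. Your sketch is a reasonable reconstruction of the standard effective Chebotarev proof, but one structural point differs from what those references actually do, and it matters. You expand $1_C$ into irreducible characters of $G$ and then invoke Brauer induction to continue each Artin $L$-function; Lagarias and Odlyzko instead use the Deuring reduction: pick $g\in C$, set $H=\langle g\rangle$, and reduce $\pi_C(x,L/K)$ to a weighted count over the cyclic extension $L/L^H$, where Artin $L$-functions are Hecke $L$-functions, hence entire for nontrivial characters, and Brauer induction is unnecessary. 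That reduction is also precisely what produces the $|\tilde C|$ factor in part (i) --- one application of the cyclic estimate per conjugacy class in $C$ --- and the passage from $|C|$ to $|\tilde C|$ is already in Lagarias--Odlyzko; Serre's refinement is mostly in the handling of conductors and discriminants (and the explicit constant). Your Brauer route can be made to work, but you should flag that the integers $a_i$ may be negative, so $L(s,\chi)$ can have poles on the critical line under GRH; this is harmless at the level of $-L'/L$ but is an extra case to track, and it is exactly what the cyclic reduction avoids. One more item worth a sentence in a complete write-up: the shape $(|C|/|G|)\,\Li(x^\beta)$ in (i) uses Stark's theorem that the exceptional zero of $\zeta_L$ is simple, hence is a zero of a single one-dimensional constituent $\chi_0$, so only $\langle 1_C,\chi_0\rangle$ with $\chi_0(1)=1$ contributes; without simplicity the coefficient would be worse.
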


\goodbreak

We now apply Theorem \ref{Chebotarev} to the torsion fields of
elliptic curves.

\begin{theorem}\label{CDT-notimproved}
Let $E$ be an elliptic curve over $\Q$ without complex
multiplication, and let $L_n, C(n)$ and $G(n)$ be as defined above.
Assuming the GRH for the Dedekind zeta function of the number fields
$L_n$, we have that
\par
{\rm (i)}
Let $d$ be a square-free integer such that $(d, M_E)=1$. Then,
$$
\pi_{C(d)}(x, L_d/\Q) = \left( \prod_{\ell \mid d}
\frac{\ell^2-2}{(\ell-1)(\ell^2-1)} \right) \Li(x) + O\left(d^3
x^{1/2} \log{(d N_E x)}\right).
$$
\par
{\rm (ii)}
Let $\ell$ be a prime such that $\ell \nmid M_E$. Then,
$$
\pi_{C(\ell^2)}(x, L_{\ell^2}/\Q) =
\frac{\ell^3-\ell-1}{\ell^2(\ell^2-1)(\ell-1)} \Li(x) + O\left(\ell^6 x^{1/2}\log{(\ell N_E x)}\right).
$$
\end{theorem}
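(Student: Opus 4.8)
The strategy is to specialize the GRH version of the effective Chebotarev theorem (Theorem \ref{Chebotarev}(ii)) to the Galois extension $L_d/\Q$ (resp.\ $L_{\ell^2}/\Q$), so the whole argument reduces to (a) computing the density $|C(d)|/|G(d)|$ exactly via Serre's structural facts \eqref{serre1}--\eqref{serre2}, and (b) bounding the quantities $n_K$, $|C|$ and $\log(M(L/K)x)$ that appear in the error term in terms of $d$ (resp.\ $\ell^2$) and $N_E$. Since the base field is $K=\Q$, we have $n_K=1$ and $d_K=1$, which already simplifies $M(L/K)$ to $|G(d)|\prod_{p}p$, the product running over rational primes ramifying in $L_d/\Q$.

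For the density computation in part (i): because $(d,M_E)=1$ and $d$ is squarefree, \eqref{serre1} gives $G(d)=\mathrm{GL}_2(\Z/d\Z)$, and \eqref{serre2} (applied prime by prime) gives the direct-product decomposition $G(d)\simeq\prod_{\ell\mid d}\mathrm{GL}_2(\Z/\ell\Z)$, with $C(d)$ decomposing compatibly. So $|C(d)|/|G(d)|=\prod_{\ell\mid d}\bigl(|C(\ell)|/|\mathrm{GL}_2(\Z/\ell\Z)|\bigr)$, and it remains to count matrices $g\in\mathrm{GL}_2(\Z/\ell\Z)$ with $\det g+1-\tr g\equiv0\pmod\ell$. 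This is a standard but slightly fiddly count: for each value of $(\tr g,\det g)$ one counts conjugacy classes and their sizes (split semisimple, non-split semisimple, scalar, non-semisimple), or more cheaply one counts directly the solutions of the single linear condition $\det g-\tr g=-1$ over $\mathrm{GL}_2$; either way one should land on $|C(\ell)|/|\mathrm{GL}_2(\Z/\ell\Z)|=(\ell^2-2)/((\ell-1)(\ell^2-1))$, which is exactly the factor claimed. For part (ii), one does the analogous count in $\mathrm{GL}_2(\Z/\ell^2\Z)$ for the congruence modulo $\ell^2$; a convenient route is to lift solutions from $\Z/\ell\Z$ to $\Z/\ell^2\Z$ (a Hensel-type count: a nonsingular solution mod $\ell$ lifts in $\ell^{3}$ ways, singular ones need care), arriving at the asserted $(\ell^3-\ell-1)/(\ell^2(\ell^2-1)(\ell-1))$.

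For the error terms: I would invoke the standard bounds on the torsion fields. Since $\rho_d$ is injective, $|G(d)|\le|\mathrm{GL}_2(\Z/d\Z)|\le d^4$, and the primes ramifying in $L_d/\Q$ divide $dN_E$, so $M(L_d/\Q)\le d^4\cdot dN_E=d^5N_E$ and $\log(M(L_d/\Q)x)\ll\log(dN_E x)$. Also $|C(d)|\le|G(d)|\le d^4$. Plugging $n_K=1$, $|C|\le d^4$ and $\log(M(L_d/\Q)x)\ll\log(dN_Ex)$ into Theorem \ref{Chebotarev}(ii) would naively give an error $O(d^4x^{1/2}\log(dN_Ex))$, which is weaker than the claimed $O(d^3x^{1/2}\log(dN_Ex))$. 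So the one genuine subtlety is recovering the exponent $3$ rather than $4$: this comes from using $|C(d)|$ rather than $|G(d)|$, since $C(d)$ is cut out by one linear congruence and hence $|C(\ell)|\asymp\ell^3$, $|C(d)|\ll d^3$ (with absolute implied constant, by multiplicativity over $\ell\mid d$). That is the step I expect to be the main obstacle to state cleanly — keeping track that the sieve-relevant set $C(d)$, not the full Galois group, governs the error — though once phrased correctly it is routine. Part (ii) is identical with $d$ replaced by $\ell^2$: $|G(\ell^2)|\le\ell^8$ gives $M\ll\ell^8\cdot\ell N_E$, $\log(M x)\ll\log(\ell N_Ex)$, and $|C(\ell^2)|\ll\ell^6$ gives the stated $O(\ell^6x^{1/2}\log(\ell N_Ex))$. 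I would present part (i) in detail and remark that part (ii) follows \emph{mutatis mutandis}.
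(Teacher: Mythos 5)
Your proposal is correct and matches the paper's (implicit) approach: the paper states Theorem~\ref{CDT-notimproved} immediately after Theorem~\ref{Chebotarev} without writing the details, and the intended proof is precisely to specialize Theorem~\ref{Chebotarev}(ii) with $K=\Q$, $n_K=1$, factor $G(d)\simeq\prod_{\ell\mid d}\mathrm{GL}_2(\F_\ell)$ via Serre's results to get the density $\prod_{\ell\mid d}(\ell^2-2)/((\ell-1)(\ell^2-1))$, and use $|C(d)|=\prod_{\ell\mid d}(\ell^3-2\ell)\le d^3$, $|C(\ell^2)|=\ell^3(\ell^3-\ell-1)\le\ell^6$, and $\log M(L/\Q)\ll\log(dN_E)$ in the error term. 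Note that your ``main obstacle'' (exponent $3$ rather than $4$) is not really a subtlety: Theorem~\ref{Chebotarev}(ii) already has $|C|$ rather than $|G|$ in the error, so the bound $|C(d)|\le d^3$ is all that is needed and falls out directly from the same multiplicativity you use to compute the density.
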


It was noticed by Serre \cite{Serre81} that one can improve
significantly the error term of Theorem \ref{Chebotarev}.(ii)
(basically replacing $|C|$ with $|C|^{1/2}$) by writing the
characteristic functions of the conjucacy classes $C$ in terms of
the irreducible characters of $G$, and then working with the Artin's
$L$-functions associated with those characters.
 Further applications
can be found in \cite{MMS88}, \cite{CoDa} and in \cite{SW05} for the
present application. For the convenience of the reader, we summarize
in the next two theorems the main features of the approach, and how
it can be applied in our case.

We first define some notation. For each character $\chi$ of $G$, let
$L(s, \chi)$ be the Artin $L$-function associated to $\chi$. If $G$ is
an abelian group, the Artin $L$-functions of $L/K$ corresponds to
Hecke $L$-functions, and are then analytic functions of the complex
plane. In general, we have

\begin{conjecture}[Artin's conjecture] Let $\chi$ be an irreducible
non-trivial character of $G$. Then, $L(s, \chi)$ is analytic in the
whole complex plane.
\end{conjecture}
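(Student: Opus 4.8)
The classical Artin holomorphy conjecture, as stated here, is still open in general; what follows is the standard line of attack, together with an indication of which part of it actually suffices for the present paper.

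First I would invoke Brauer's induction theorem: the character $\chi$ is a $\Z$-linear combination $\chi = \sum_j m_j \, \mathrm{Ind}_{H_j}^{G} \psi_j$ with each $H_j \le G$, each $\psi_j$ a one-dimensional character of $H_j$, and $m_j \in \Z$. By the additivity and inductivity of the Artin $L$-function, together with Artin reciprocity (which identifies one-dimensional Artin $L$-functions with Hecke $L$-functions over the corresponding fixed fields), this yields
$$
L(s,\chi) \;=\; \prod_j L\bigl(s,\psi_j\bigr)^{m_j},
$$
a product of Hecke $L$-functions raised to integer exponents. In particular $L(s,\chi)$ extends meromorphically to all of $\C$ and satisfies a functional equation of the expected shape. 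The defect is that some $m_j$ may be negative, so a priori $L(s,\chi)$ could acquire poles from zeros of the Hecke $L$-functions in the denominator; removing these is exactly the content of the conjecture.

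To upgrade meromorphy to holomorphy one would argue by cases on the representation. If $\chi$ is monomial — induced from a one-dimensional character of a subgroup, which holds for every irreducible $\chi$ when $G$ is supersolvable — then $L(s,\chi)$ is literally a Hecke $L$-function attached to a nontrivial character and hence entire. For the two-dimensional irreducible $\chi$ that dominate the applications to $\mathrm{GL}_2(\Z/n\Z)$-extensions, one appeals to automorphy: when the projective image is cyclic, dihedral, $A_4$ or $S_4$ the work of Langlands and Tunnell attaches to $\chi$ a cuspidal automorphic representation of $\mathrm{GL}_2$ whose standard $L$-function is entire and equals $L(s,\chi)$, and the remaining icosahedral ($A_5$) case is covered in almost all instances by the modularity lifting theorems of Khare and Wintenberger and their collaborators. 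In higher dimension, symmetric powers, Rankin--Selberg convolutions and base change yield many further cases, but no method is known that handles a general irreducible $\chi$.

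The main obstacle is therefore genuine: Artin's conjecture in full generality remains unproved, the obstruction being precisely the possible non-cancellation of zeros of the Hecke $L$-functions occurring with negative multiplicity in the Brauer decomposition. For the sieve carried out in this paper, however, the full conjecture is not needed. The Chebotarev estimates of Theorem \ref{Chebotarev}.(ii) and Theorem \ref{CDT-notimproved}, in the form used by Serre \cite{Serre81} and Murty, Murty and Saradha \cite{MMS88}, require of each $L(s,\chi)$ only a meromorphic continuation, a standard functional equation, and a zero-free region to the right of $\re s = \theta$. The first two are supplied unconditionally by the Brauer decomposition above, and the zero-free region is supplied — for the Hecke $L$-functions appearing there, hence for $L(s,\chi)$ itself — by Hypothesis \ref{theta}. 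Thus everything the paper actually uses can be established without assuming Artin's conjecture, which is recorded here only because it is the natural clean hypothesis under which the character-theoretic argument is most transparently stated.
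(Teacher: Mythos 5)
You are correct to treat this statement as exactly what the paper labels it: a conjecture, for which the paper supplies (and could supply) no proof, and your account of its status is accurate. Moreover, your closing observation is precisely how the paper sidesteps the issue: Hypothesis \ref{theta} is only imposed on Dedekind zeta functions and on Artin $L$-functions already known to satisfy Artin's conjecture, and in the application the conjecture is invoked only for the abelian quotient $B(\ell)/U(\ell)$, where the Artin $L$-functions are Hecke $L$-functions, which is exactly the setting in which Theorem \ref{theoremartin} is applied to obtain (\ref{secondEQ}).
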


We will write the improvement of  Theorem \ref{Chebotarev} under the
$\theta$-hypothesis for the zeros of the $L$-functions in the
critical strip, where $1/2 \leq \theta <1$, and not the full Riemann
Hypothesis, which allows us to obtain an improvement of the results
of \cite{SW05} under a weaker hypothesis.

\begin{hypothesis}[$\theta$-hypothesis] \label{theta} Let $L(s)$ be a Dedekind zeta function, or an Artin
$L$-function satisfying Artin's conjecture. Let $1/2 \leq \theta < 1$.
Then $L(s)$ is non-zero for $\re s > \theta$.
\end{hypothesis}

Let $\varphi$ be a class function on $G$, i.e. a function which is
invariant under conjugation.
Define
$$
\pi_{\varphi}(x, L/K)
:= \sum_{\substack{N \frakp \leq x\\ {\frakp \; \text{unramified}}}}
\varphi(\sigma_\frakp).
$$
If $C$ is a conjugacy class (or a union of conjugacy classes) in
$G$, and $1_C$ is its characteristic function, then $\pi_{1_C}(x,
L/K) = \pi_C(x, L/K)$ as defined above.

To define $\tilde{\pi}_{\varphi}(x, L/K)$, we need to extend the
definition of the Artin symbol $\sigma_\frakp$ at the ramified
primes $\frakp$. This is done in \cite[Section 2.5]{Serre81}, and we
refer the reader to this paper. Then,
$$
\tilde{\pi}_{\varphi}(x, L/K) = \sum_{{N \frakp^m \leq x}}
\frac{1}{m} \varphi(\sigma_\frakp^m),
$$
where the sum runs over all pairs of primes $\frakp$ of $K$ and
integers $m \geq 1$ such that $N \frakp^m \leq x$. With this
definition, if $\varphi=\chi$ is a character of $G$ and $L(s, \chi)$
is the Artin $L$-function of $\chi$ with
$$
\log{L(s, \chi)} = \sum_{n=1}^\infty a_n(\chi) n^{-s},
$$
then
$$
\tilde{\pi}_{\chi}(x, L/K) = \sum_{n \leq x} a_n(\chi).
$$
Then, $\tilde{\pi}_{\varphi}(x, L/K)$ has the following two
important properties.

\begin{lemma}\cite[Propostion 7]{Serre81}
Under the previous notation, we have
\label{propone}
$$
{\pi}_{\varphi}(x, L/K) = \tilde{\pi}_{\varphi}(x, L/K) + O \left(
\sup_{g \in G} | \varphi(g) | \left( \frac{1}{|G|} \log{d_L} + n_K
x^{1/2} \right) \right) .
$$
\end{lemma}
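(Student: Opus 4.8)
The plan is to observe that $\tilde\pi_\varphi(x,L/K)$ and $\pi_\varphi(x,L/K)$ are assembled from the same local data, so that their difference consists only of (i) the higher prime powers $N\frakp^{m}\le x$ with $m\ge2$ and (ii) the ramified primes $\frakp$ with $m=1$, and both of these are negligible. First I would write, straight from the two definitions,
$$
\tilde\pi_\varphi(x,L/K)-\pi_\varphi(x,L/K)
=\sum_{\substack{N\frakp\le x\\ \frakp\ \text{ramified}}}\varphi(\sigma_\frakp)
\;+\;\sum_{\substack{N\frakp^{m}\le x\\ m\ge2}}\frac1m\,\varphi(\sigma_\frakp^{m}),
$$
and then note that, by the construction of the extended Artin symbol in \cite[Section 2.5]{Serre81}, each quantity $\varphi(\sigma_\frakp^{m})$ is an average of values of $\varphi$ on $G$ (a single value when $\frakp$ is unramified), so that $|\varphi(\sigma_\frakp^{m})|\le\sup_{g\in G}|\varphi(g)|$ in every term. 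It therefore suffices to bound the number of terms in each of the two sums.

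For the prime-power sum I would use the crude estimate that the number of prime ideals $\frakp$ of $K$ with $N\frakp\le y$ is at most $n_K\,\pi(y)\ll n_K y$, since each rational prime $\le y$ has at most $n_K$ primes of $K$ above it. The $m=2$ part then contributes $\ll n_K x^{1/2}$, and, as $m$ ranges over at most $\log_2 x$ values, the part with $m\ge3$ contributes $\ll n_K x^{1/3}\log x=o(n_K x^{1/2})$. Hence the prime-power sum is $O\!\big(\sup_{g}|\varphi(g)|\cdot n_K x^{1/2}\big)$, which is the second term in the asserted error.

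For the ramified-prime sum the one delicate point, and the main obstacle, is to extract the saving $|G|^{-1}$: the idea is that each prime of $K$ that ramifies in $L/K$ forces a definite amount of discriminant. If $\frakp$ ramifies, with ramification index $e\ge2$ and residue degree $f$ for the primes above it, then, $L/K$ being Galois, all $|G|/(ef)$ primes $\mathfrak{P}\mid\frakp$ are ramified and each satisfies $v_{\mathfrak{P}}(\mathfrak{D}_{L/\Q})\ge v_{\mathfrak{P}}(\mathfrak{D}_{L/K})\ge e-1$; since $N_{L/\Q}\mathfrak{P}=(N_{K/\Q}\frakp)^{f}\ge2$, the primes above $\frakp$ contribute at least $\frac{|G|}{ef}(e-1)f\log 2=|G|\,\frac{e-1}{e}\,\log 2\ge\frac12|G|\log 2$ to $\log d_L=\sum_{\mathfrak{P}}v_{\mathfrak{P}}(\mathfrak{D}_{L/\Q})\log N_{L/\Q}\mathfrak{P}$. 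Summing over the ramified primes of $K$ gives $\#\{\frakp\ \text{ramified in}\ L/K\}\le\frac{2}{|G|\log 2}\log d_L$, so the ramified sum is $O\!\big(\sup_g|\varphi(g)|\cdot|G|^{-1}\log d_L\big)$. Combining the two estimates yields the lemma; note that the whole argument is elementary and uses no information about the zeros of $L$-functions.
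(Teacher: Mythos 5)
The paper does not prove this lemma; it simply cites it as Proposition 7 of \cite{Serre81}, so there is no in-paper proof to compare against. Your argument is correct and is essentially Serre's own proof: split $\tilde\pi_\varphi-\pi_\varphi$ into the $m\ge2$ prime-power terms (bounded by $O(n_Kx^{1/2})$ via the crude estimate that at most $n_K$ primes of $K$ lie over each rational prime, so $\#\{\frakp:N\frakp\le y\}\le n_K\pi(y)$) and the ramified $m=1$ terms (bounded by counting ramified primes via $\log d_L$). The one delicate point you correctly handle is extracting the $|G|^{-1}$ factor: each ramified prime of $K$, having $|G|/(ef)$ primes of $L$ above it each with $v_\mathfrak{P}(\mathfrak{D}_{L/\Q})\ge e-1$ and $\log N_{L/\Q}\mathfrak{P}=f\log N_{K/\Q}\frakp\ge f\log 2$, forces a contribution of at least $|G|\frac{e-1}{e}\log2\ge\frac12|G|\log2$ to $\log d_L$, so the number of ramified primes is $\ll|G|^{-1}\log d_L$. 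You also correctly invoke Serre's extension of the Artin symbol at ramified primes to get the uniform bound $|\varphi(\sigma_\frakp^m)|\le\sup_{g\in G}|\varphi(g)|$ as an average of values of $\varphi$. The argument is elementary, uses no $L$-function input, and fills the citation faithfully.
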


\begin{lemma}\cite[Propostion 8]{Serre81}
\label{proptwo} Let $H$ be a subgroup of $G$, $\varphi_H$ a class
function on $H$ and $\varphi_G = {\rm Ind}_H^G(\varphi_H)$. Then
$$
{\pi}_{\varphi_G}(x, L/K) = \tilde{\pi}_{\varphi_H}(x, L/L^H) .
$$
\end{lemma}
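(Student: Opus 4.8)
The plan is to reduce the identity to the inductivity (``Artin formalism'') of Artin $L$-functions, after first trading the prime-counting function $\pi_{\varphi_G}$ on the base field for $\tilde\pi_{\varphi_G}$. The bridge between these two is Lemma~\ref{propone} itself, applied to the class function $\varphi_G$ on $G=\Gal(L/K)$: it gives
$$\pi_{\varphi_G}(x,L/K)=\tilde\pi_{\varphi_G}(x,L/K)+O\!\left(\sup_{g\in G}|\varphi_G(g)|\Big(\tfrac{1}{|G|}\log d_L+n_K x^{1/2}\Big)\right),$$
an error of exactly the shape already carried by the Chebotarev estimates of this section. Hence it is equivalent to prove the symmetric statement $\tilde\pi_{\varphi_G}(x,L/K)=\tilde\pi_{\varphi_H}(x,L/L^H)$; the displayed form of Lemma~\ref{proptwo} is this identity with the correction above (harmless, and of the same size as the other error terms present here) absorbed.

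For the symmetric statement I would first observe that both $\varphi\mapsto\tilde\pi_\varphi$ and $\varphi_H\mapsto\mathrm{Ind}_H^G\varphi_H$ are linear, and that the irreducible characters of $H=\Gal(L/L^H)$ span the space of class functions on $H$; so I may take $\varphi_H=\psi$ to be an irreducible character of $H$, whence $\varphi_G=\mathrm{Ind}_H^G\psi$ is the character of a genuine representation of $G$. The core input is then Artin's inductivity of $L$-functions, $L(s,\mathrm{Ind}_H^G\psi;\,L/K)=L(s,\psi;\,L/L^H)$, which holds as an identity of Euler products factor by factor, in particular at the ramified primes, where the local factor is built from the Frobenius acting on the inertia-invariant subspace --- precisely the datum packaged by the extended Artin symbol of \cite[Section~2.5]{Serre81}. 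Writing $\log L(s,\chi)=\sum_{n\ge1}a_n(\chi)n^{-s}$ and recalling (as set up just before Lemma~\ref{propone}) that $\tilde\pi_\chi(y,\cdot)=\sum_{n\le y}a_n(\chi)$, inductivity forces $a_n(\mathrm{Ind}_H^G\psi;L/K)=a_n(\psi;L/L^H)$ for \emph{every} $n$; summing over $n\le x$ gives $\tilde\pi_{\mathrm{Ind}_H^G\psi}(x,L/K)=\tilde\pi_\psi(x,L/L^H)$, and re-expanding $\varphi_H$ into irreducibles yields the symmetric identity. (Equivalently one can argue directly: the Mackey double-coset formula for $\mathrm{Res}_{\langle\sigma\rangle}\mathrm{Ind}_H^G\psi$, with $\sigma$ a Frobenius at $\mathfrak{p}$, gives $\tfrac1k\varphi_G(\sigma_{\mathfrak{p}}^k)=\sum_{\mathfrak{q}\mid\mathfrak{p}}\sum_{f(\mathfrak{q}/\mathfrak{p})m=k}\tfrac1m\varphi_H(\sigma_{\mathfrak{q}}^m)$ for $\mathfrak{p}$ unramified, with the ramified case again handled by \cite[Section~2.5]{Serre81}, and summing over all $\mathfrak{p}$ and $k$ with $N_K\mathfrak{p}^k\le x$ re-sums to $\tilde\pi_{\varphi_H}(x,L/L^H)$.)

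The step I expect to be the real obstacle is the inductivity of $L$-functions at the \emph{bad} Euler factors: one must know that forming the induced representation commutes with passing to inertia invariants, so that the equality of $L$-functions --- and therefore of all Dirichlet coefficients $a_n$, not merely those supported on primes unramified in $L/K$ --- genuinely holds. This is classical (it is the very reason $\tilde\pi$ is defined through the extended symbol), but it is the one place where the precise conventions of \cite[Section~2.5]{Serre81} have to be used rather than just quoted. Everything else --- linearity, the reduction to an irreducible $\psi$, and the passage from $\pi_{\varphi_G}$ to $\tilde\pi_{\varphi_G}$ via Lemma~\ref{propone} --- is routine, though the last should not be overlooked.
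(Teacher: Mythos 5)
Your proposal is correct and takes essentially the same route as the paper: the paper's entire proof is the one-line appeal to the invariance of Artin $L$-functions under induction, which is precisely the core of your argument (linearity, reduction to irreducible characters of $H$, and comparison of the Dirichlet coefficients of $\log L$). Your further observation that the displayed identity really is Serre's $\tilde{\pi}_{\varphi_G}(x,L/K)=\tilde{\pi}_{\varphi_H}(x,L/L^H)$ with the missing tilde on the left accounted for by Lemma \ref{propone} is a fair reading, and it matches how the lemma is actually used (with error terms from Lemma \ref{propone}) in the proof of Theorem \ref{theoreminduced}.
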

\begin{proof} This follows from the invariance of the Artin
$L$-functions of induced characters. \end{proof}

Using lemmas \ref{propone} and \ref{proptwo}, we deduce that

\begin{theorem} \label{theoreminduced}
Let $L/K$ be a Galois extension of number fields
with Galois group $G$. Let $H$ be a subgroup of $G$ and $C$ a
conjugacy class in $G$ such that $C \cap H \neq \emptyset.$ Let
$C_H$ be the conjucacy class in $H$ generated by $C \cap H$. Then
\begin{align*}
\pi_C(x, L/K)
& = \frac{|H|}{|G|}  \frac{|C|}{|C_H|} \pi_{C_H}(x, L/L^H)
\\
& \quad
+ O \left( \frac{|C|}{|C_H| |G|} \log{d_L} + \frac{|H|}{|G|}
\frac{|C|}{|C_H|} [L^H: \Q] x^{1/2} + [K:\Q] x^{1/2} \right).
\end{align*}
\end{theorem}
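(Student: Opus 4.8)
The plan is to realise the characteristic function $1_C$ of $C$ on $G$ as a constant multiple of a class function induced from $H$, and then to run this through the two properties of $\tilde{\pi}$ recorded in Lemmas \ref{propone} and \ref{proptwo}: Lemma \ref{propone} passes between $\pi$ and $\tilde{\pi}$ (both for $L/K$ and for $L/L^H$), and Lemma \ref{proptwo} shows $\tilde{\pi}$ is compatible with induction. Bookkeeping of the resulting error terms will then give the formula.

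First I would determine ${\rm Ind}_H^G(1_{C_H})$. By the Frobenius formula for induced class functions,
$$
{\rm Ind}_H^G(1_{C_H})(g) = \frac{1}{|H|}\,\big|\{x \in G : x^{-1}gx \in C_H\}\big|.
$$
The hypothesis $C \cap H \neq \emptyset$ together with the definition of $C_H$ forces $C_H \subseteq C$ (the set $C$, being a union of $G$-conjugacy classes, is stable under $H$-conjugation), so the right-hand side vanishes unless $g \in C$; and being an induced class function it is constant on the conjugacy class $C$. Hence ${\rm Ind}_H^G(1_{C_H}) = \lambda\, 1_C$ for some $\lambda \geq 0$. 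Pairing with the trivial character and applying Frobenius reciprocity,
$$
\frac{\lambda |C|}{|G|} = \big\langle {\rm Ind}_H^G(1_{C_H}), 1_G \big\rangle_G = \big\langle 1_{C_H}, 1_H \big\rangle_H = \frac{|C_H|}{|H|},
$$
so $\lambda = |G|\,|C_H|/(|H|\,|C|)$. Thus $1_C = \frac{|H|}{|G|}\frac{|C|}{|C_H|}\,{\rm Ind}_H^G(1_{C_H})$, and summing over Frobenius symbols,
$$
\pi_C(x, L/K) = \frac{|H|}{|G|}\frac{|C|}{|C_H|}\,\pi_{{\rm Ind}_H^G(1_{C_H})}(x, L/K).
$$

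Next I would apply, in order: Lemma \ref{propone} on $G$ to the class function $\varphi = {\rm Ind}_H^G(1_{C_H})$, whose supremum norm equals $\lambda$, giving
$$
\pi_{{\rm Ind}_H^G(1_{C_H})}(x, L/K) = \tilde{\pi}_{{\rm Ind}_H^G(1_{C_H})}(x, L/K) + O\Big(\lambda\big(\tfrac{1}{|G|}\log d_L + n_K x^{1/2}\big)\Big);
$$
Lemma \ref{proptwo} with the subgroup $H$, which rewrites $\tilde{\pi}_{{\rm Ind}_H^G(1_{C_H})}(x, L/K) = \tilde{\pi}_{1_{C_H}}(x, L/L^H)$; and Lemma \ref{propone} once more, now for the extension $L/L^H$ with Galois group $H$ — so $n_K$ is replaced by $[L^H:\Q]$, $|G|$ by $|H|$, while $d_L$ is unchanged — with $\sup_{h\in H}|1_{C_H}(h)| = 1$, giving
$$
\tilde{\pi}_{1_{C_H}}(x, L/L^H) = \pi_{C_H}(x, L/L^H) + O\big(\tfrac{1}{|H|}\log d_L + [L^H:\Q] x^{1/2}\big).
$$
Multiplying through by $\frac{|H|}{|G|}\frac{|C|}{|C_H|}$ produces the leading term $\frac{|H|}{|G|}\frac{|C|}{|C_H|}\pi_{C_H}(x, L/L^H)$; the $\lambda$-error collapses to $O\big(\tfrac{1}{|G|}\log d_L + n_K x^{1/2}\big)$ with $n_K = [K:\Q]$, and the leftover $\tfrac{1}{|G|}\log d_L$ is absorbed into $\tfrac{|C|}{|C_H||G|}\log d_L$ since $|C_H|\le|C|$. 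This is exactly the claimed identity.

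The argument is bookkeeping on top of Serre's two lemmas, so there is no serious obstacle. The two points that need care are the verification that ${\rm Ind}_H^G(1_{C_H})$ is supported exactly on $C$ and is constant there, together with the precise value of $\lambda$ (this is where $C_H\subseteq C$, hence the hypothesis $C\cap H\neq\emptyset$, is used); and keeping straight, in the two applications of Lemma \ref{propone}, that the discriminant is that of the common top field $L$ both times while the base-field degree drops from $n_K$ to $[L^H:\Q]$ in the second application.
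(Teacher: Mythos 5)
Your proof is correct and follows the paper's intended route: compute the constant $|G|\,|C_H|/(|H|\,|C|)$ so that $1_C$ is a scalar multiple of $\mathrm{Ind}_H^G(1_{C_H})$, then pass through $\tilde\pi$ via Lemma \ref{proptwo} and account for the error by Lemma \ref{propone}. Your version is actually the more careful one: by applying Lemma \ref{propone} \emph{twice} (once for $L/K$, once for $L/L^H$) you recover all three error terms, including $[K:\Q]\,x^{1/2}$ — this implicitly corrects what appears to be a typo in the paper's Lemma \ref{proptwo} (Serre's Proposition 8 reads $\tilde\pi_{\varphi_G}(x,L/K)=\tilde\pi_{\varphi_H}(x,L/L^H)$, with tildes on both sides), since as literally printed Lemma \ref{proptwo} would yield only the first two error terms.
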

\begin{proof} Let $\varphi$ be the class function on $G$ induced
from $C_H$. It is easy to see that
$$
\varphi = \mbox{Ind}_H^G(1_{C_H}) = \frac{|G|}{|H|}
\frac{|C_H|}{|C|} 1_C,
$$
and by Lemma \ref{proptwo}
$$
 \tilde{\pi}_{C_H}(x, L/L^H) =  \frac{|G|}{|H|}
\frac{|C_H|}{|C|} \pi_C(x, L/K).
$$
Using Lemma \ref{propone} to bound the difference between $\pi$ and
$\tilde{\pi}$, we get the result.
\end{proof}

The second piece needed for the improved Chebotarev Density Theorem
is an estimate for $\pi_C(x, L/K)$ in the case that $G$ has a normal
subgroup $H$ with the property that the Artin $L$-functions of $G/H$
satisfy Artin's conjecture and the $\theta$-hypothesis. For
$\theta=1/2$, the following theorem is Proposition 3.12 from
\cite{MMS88}.

\begin{theorem} \label{theoremartin}
Let $D$ be a non-empty union of conjugacy classes in
$G$ and let $H$ be a normal subgroup of $G$ such that for all Artin
$L$-functions attached to characters of $G/H \simeq {\rm Gal}(L^H/K)$,
the Artin conjecture and the $\theta$-hypothesis
hold. Suppose also that $HD \subseteq D$. Then,
$$
\pi_D(x, L/K) = \frac{|D|}{|G|} \Li(x)
+ O \bigg( \left(\frac{|D|}{|H|} \right)^{1/2} x^\theta n_K \log{\big(M(L/K)x\big)} \bigg).
$$
\end{theorem}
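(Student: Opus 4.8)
My plan is to pass to the quotient $\bar G:=G/H\simeq\mathrm{Gal}(L^H/K)$, where by hypothesis the relevant $L$-functions are entire and satisfy the $\theta$-hypothesis, and then run the explicit-formula argument of \cite{LaOd, MMS88} with the half-plane $\re s>\tfrac12$ widened to $\re s>\theta$. First I would exploit the structure forced on $D$: since $H$ is normal and $HD\subseteq D$, one has $HD=D$, so $D$ is a union of cosets of $H$; being also a union of conjugacy classes of $G$, its image $\bar D$ under the projection $\pi\colon G\to\bar G$ is a union of conjugacy classes of $\bar G$ and $D=\pi^{-1}(\bar D)$. Hence $1_D$ is the inflation of $1_{\bar D}$, so by the invariance of Artin $L$-functions under inflation, $\tilde\pi_{1_D}(x,L/K)=\tilde\pi_{1_{\bar D}}(x,L^H/K)$; applying Lemma \ref{propone} on $L/K$ (with $\sup_g|1_D(g)|=1$) then gives
$$
\pi_D(x,L/K)=\tilde\pi_{1_{\bar D}}(x,L^H/K)+O\!\Big(\tfrac{1}{|G|}\log d_L+n_Kx^{1/2}\Big).
$$
Expanding $1_{\bar D}=\sum_{\bar\chi}\langle 1_{\bar D},\bar\chi\rangle\,\bar\chi$ over the irreducible characters of $\bar G$ and recalling $\tilde\pi_{\bar\chi}(x,L^H/K)=\sum_{n\le x}a_n(\bar\chi)$, the task reduces to estimating each $\sum_{n\le x}a_n(\bar\chi)$ and recombining.

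Next I would assemble the two analytic inputs. For the trivial character $\chi_0$ we have $L(s,\chi_0)=\zeta_K(s)$, and the $\theta$-hypothesis for $\zeta_K$ combined with the explicit formula yields $\sum_{n\le x}a_n(\chi_0)=\Li(x)+O\big(x^\theta(\log d_K+n_K\log x)\big)$. For a non-trivial irreducible $\bar\chi$, Artin's conjecture makes $L(s,\bar\chi)$ entire, the $\theta$-hypothesis confines its non-trivial zeros to $\re s\le\theta$, and a smoothed Perron-type explicit formula — using the standard bound for the number of zeros of $L(s,\bar\chi)$ in a unit box in terms of $\log\mathfrak q(\bar\chi)+\bar\chi(1)n_K$, where $\mathfrak q(\bar\chi)=d_K^{\bar\chi(1)}N\mathfrak f(\bar\chi)$ is the conductor — gives $\sum_{n\le x}a_n(\bar\chi)=O\big(x^\theta(\log\mathfrak q(\bar\chi)+\bar\chi(1)n_K\log x)\big)$. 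For $\theta=\tfrac12$ these are exactly the estimates of \cite{MMS88, Serre81}, and their proofs go through once the zero-free half-plane is widened; I would simply carry the parameter $\theta$ through the contour shift and partial summation.

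Then I would recombine. The trivial character supplies the main term $\langle 1_{\bar D},\chi_0\rangle\Li(x)=\frac{|\bar D|}{|\bar G|}\Li(x)=\frac{|D|}{|G|}\Li(x)$. For the remaining error I would use Serre's conductor bound $\log\mathfrak q(\bar\chi)\ll\bar\chi(1)\,n_K\log M(L/K)$ (the conductor $\mathfrak f(\bar\chi)$ being supported on primes lying below those ramified in $L/K$, with exponent controlled by $|G|$), together with Cauchy--Schwarz and orthogonality:
$$
\sum_{\bar\chi}|\langle 1_{\bar D},\bar\chi\rangle|\,\bar\chi(1)
\le\Big(\sum_{\bar\chi}|\langle 1_{\bar D},\bar\chi\rangle|^2\Big)^{1/2}\Big(\sum_{\bar\chi}\bar\chi(1)^2\Big)^{1/2}
=\Big(\tfrac{|\bar D|}{|\bar G|}\Big)^{1/2}|\bar G|^{1/2}
=|\bar D|^{1/2}=\Big(\tfrac{|D|}{|H|}\Big)^{1/2}.
$$
Combining these bounds the full non-trivial contribution by $\ll(|D|/|H|)^{1/2}x^\theta n_K\log(M(L/K)x)$; the trivial-character error is absorbed because $|D|/|G|\le1\le(|D|/|H|)^{1/2}$ (here $|D|\ge|H|$ since $D$ is a non-empty union of $H$-cosets), and the $O(\tfrac1{|G|}\log d_L+n_Kx^{1/2})$ term from Lemma \ref{propone} is absorbed using $\theta\ge\tfrac12$ and the conductor--discriminant/Minkowski bound $\log d_L\ll n_K|G|\log M(L/K)$. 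This yields the stated estimate.

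The main obstacle is the second step: producing the explicit-formula estimate for $\sum_{n\le x}a_n(\bar\chi)$ with the exponent $x^\theta$ and with fully explicit dependence on the conductor $\mathfrak q(\bar\chi)$ and the analytic degree $\bar\chi(1)n_K$. The delicate points are the transfer from a $\psi$-type sum, where the explicit formula is cleanest, to the $\tilde\pi$-type sum by partial summation, and the uniform zero-counting bound in the box — both already handled in \cite{MMS88, Serre81} for $\theta=\tfrac12$, so the residual work is essentially bookkeeping in the parameter $\theta$ rather than a new idea.
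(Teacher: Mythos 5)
Your proposal is correct and follows the argument of Murty--Murty--Saradha (Proposition 3.12 of \cite{MMS88}), which is exactly what the paper cites for this theorem at $\theta=\tfrac12$ before stating the $\theta$-generalization without proof. The key steps you identify — descending $1_D$ to a union of conjugacy classes $\bar D$ of $G/H$ via $HD=D$, the inflation-invariance of Artin $L$-functions giving $\tilde\pi_{1_D}(x,L/K)=\tilde\pi_{1_{\bar D}}(x,L^H/K)$, the character expansion with the explicit formula applied to each $L(s,\bar\chi)$, and the Cauchy--Schwarz/orthogonality bound $\sum_{\bar\chi}|\langle 1_{\bar D},\bar\chi\rangle|\bar\chi(1)\le(|D|/|H|)^{1/2}$ — are precisely the mechanism of that result, and carrying the parameter $\theta$ through the contour shift is indeed routine.
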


We now apply the last two theorems to get an improvement to Theorem
\ref{CDT-notimproved}. Let $E$ be an elliptic curve without complex
multiplication, let $\ell \nmid M_E$, and let $L_\ell/\Q$, $G(\ell)$
and $C(\ell)$ be as defined above. Let $B(\ell) \subset G(\ell)$ be
the subgroup of Borel matrices. Let $C_B(\ell)$ be the union of
conjugacy classes generated by $B(\ell) \cap C(\ell)$.
Applying Theorem \ref{theoreminduced}, we get
\begin{equation}\label{firstEQ}
\pi_{C(\ell)}(x, L_\ell/\Q) = \frac{|B(\ell)|}{|G(\ell)|}
\frac{|C(\ell)|}{|C_B(\ell)|} \pi_{C_{B(\ell)}}\big(x,
L_\ell/L_\ell^{B(\ell)}\big) + O \left( \ell \log{(\ell N_E)} + \ell
x^{1/2} \right)
\end{equation}
using the bounds of \cite[Section 1.4]{Serre81} for $\log{d_L}$.

Let $U(\ell) \subset B(\ell)$ be the subgroup of unipotent matrices.
It is easy to see that $U(\ell)$ is a normal subgroup of $B(\ell)$,
and that $B(\ell)/U(\ell)$ is the abelian group of diagonal matrices
over $\F_\ell$. Artin's conjecture then holds for all $L$-functions
of $L_\ell^{U(\ell)}/L_\ell^{B(\ell)}$, and  we apply Theorem
\ref{theoremartin} with $G=B(\ell)$, $H=U(\ell)$ and $D=C_B(\ell)$
under the $\theta$-hypothesis for the appropriate $L$-functions.
This gives
\begin{equation}\label{secondEQ}
\pi_{C_B(\ell)}\big(x, L_\ell/L_\ell^{B(\ell)}\big) =
\frac{|C_B(\ell)|}{|B(\ell)|} \mbox{Li}(x) + O \left(  \ell^{3/2}
x^\theta \log{(\ell N_E x)} \right).
\end{equation}

We are now ready to state the improvement to Theorem
\ref{CDT-notimproved}. In the next theorem, all error terms depend
on the elliptic curve $E$. We remark that we need a version of the
Cheboratev Density Theorem in the extension $L_{n}$ where $n$ is not necessarily co-prime
to $M_E$
in order to sieve the sequence of (\ref{Amodified}).

\begin{theorem}\label{CDT-improved}
Let $E$ be an elliptic curve over $\Q$ without complex
multiplication. Assuming the $\theta$-hypothesis for the
Dedekind zeta functions of the number fields $L_n/\Q$, we have

{\rm (i)}
Let $d, m$ be a square-free integers such that
$(d, M_E)=1$ and $m \mid M_E$. Then,
$$
\pi_{C(dm)}(x, L_{dm}/\Q) =  \frac{|C(m)|}{|G(m)|} \left( \prod_{\ell
\mid d} \frac{\ell^2-2}{(\ell-1)(\ell^2-1)} \right) \Li(x) + O_E
\left(d^{3/2}x^{\theta} \log{(d  x)}\right).
$$

{\rm (ii)}
Let $\ell$ be a prime such that $\ell \nmid M_E$. Then,
$$
\pi_{C(\ell^2)}(x, L_{\ell^2}/\Q) =
\frac{\ell^3-\ell-1}{\ell^2(\ell^2-1)(\ell-1)} \Li(x) +
O_E\left(\ell^3x^{\theta}\log{(\ell x)}\right).
$$
\end{theorem}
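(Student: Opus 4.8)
The plan is to treat both parts by the mechanism already illustrated in \eqref{firstEQ}--\eqref{secondEQ}: realize the characteristic function of the set of conjugacy classes in question as a bounded combination of class functions induced from a Borel subgroup, transport it to the fixed field of that Borel by Lemma~\ref{proptwo} (equivalently Theorem~\ref{theoreminduced}), and there apply the square-root-saving Theorem~\ref{theoremartin} with the unipotent radical as the normal subgroup $H$, so that only the Hecke $L$-functions of the diagonal torus $B/U$ intervene. The $\theta$-hypothesis for those Hecke $L$-functions is not an extra assumption: the corresponding abelian subextension sits inside some $L_n$, its Dedekind zeta function divides $\zeta_{L_n}$ (the quotient being entire by Aramata--Brauer, as $L_n/\Q$ is Galois), and hence its zeros lie among those of $\zeta_{L_n}$. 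For part~(ii), since $\ell\nmid M_E$ we have $G(\ell^2)=\mathrm{GL}_2(\Z/\ell^2\Z)$ by \eqref{serre1}, and one runs \eqref{firstEQ}--\eqref{secondEQ} verbatim with $\ell^2$ in place of $\ell$: let $B=B(\ell^2)$ be the group of upper-triangular invertible matrices mod $\ell^2$; then $I\in C(\ell^2)\cap B$, so this intersection is nonempty, and $C_B:=C(\ell^2)\cap B$ is automatically a union of $B$-classes since $C(\ell^2)$ is conjugation-invariant; since left multiplication by a unipotent matrix leaves the diagonal --- hence the trace and determinant --- of an upper-triangular matrix unchanged, $C_B$ is a union of cosets of $U(\ell^2)$, and Theorem~\ref{theoremartin} applies with $G=B(\ell^2)$, $H=U(\ell^2)$, $D=C_B$. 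Combining the (exact) main terms through Lemmas~\ref{propone}--\ref{proptwo} and the identity $\mathrm{Ind}_{B}^{G(\ell^2)}1_{C_B}=\tfrac{|G(\ell^2)|}{|B|}\tfrac{|C_B|}{|C(\ell^2)|}1_{C(\ell^2)}$ gives the density $|C(\ell^2)|/|G(\ell^2)|=\tfrac{\ell^3-\ell-1}{\ell^2(\ell^2-1)(\ell-1)}$, and, after bounding $[L_{\ell^2}^{B}:\Q]=\ell(\ell+1)$, $|C_B|/|U(\ell^2)|\ll\ell^2$ and $\log d_{L_{\ell^2}}$ via Serre's \S1.4 estimates, all error terms assemble into $O_E(\ell^3x^\theta\log(\ell x))$.

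For part~(i) the only new feature is the fixed factor $G(m)$, $m\mid M_E$. By \eqref{serre1}--\eqref{serre2} and $(d,m)=1$ one has $G(dm)\cong G(d)\times G(m)$ with $G(d)=\mathrm{GL}_2(\Z/d\Z)$, and the congruence $\det g+1-\tr g\equiv 0\ (\mathrm{mod}\ dm)$ splits by the Chinese Remainder Theorem, so that $C(dm)=C(d)\times C(m)$ and $1_{C(dm)}=1_{C(d)}\otimes 1_{C(m)}$. As $m\mid M_E$ is bounded, I would use Brauer's induction theorem to write $1_{C(m)}=\sum_j\beta_j\,\mathrm{Ind}_{H_j}^{G(m)}\lambda_j$, a finite combination with $\beta_j\in\C$ and linear characters $\lambda_j$ of subgroups $H_j\le G(m)$, all depending only on $M_E$; on the $d$-side the role of \eqref{firstEQ} is played by $1_{C(d)}=\tfrac{|B(d)|}{|G(d)|}\tfrac{|C(d)|}{|C_B(d)|}\,\mathrm{Ind}_{B(d)}^{G(d)}1_{C_B(d)}$ with $B(d)$ the Borel and $C_B(d)=C(d)\cap B(d)$ (the only input about $\mathrm{GL}_2(\Z/d\Z)$ being that every $g\in C(\ell)$ has $1$ as an eigenvalue and so is conjugate into $B(\ell)$). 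Tensoring these two decompositions expresses $1_{C(dm)}$ as a bounded $\C$-linear combination of the induced class functions $\mathrm{Ind}_{B(d)\times H_j}^{G(dm)}\!\big(1_{C_B(d)}\otimes\lambda_j\big)$, whereupon Lemma~\ref{proptwo} transports $\pi_{C(dm)}(x,L_{dm}/\Q)$, up to the $\tilde\pi$-versus-$\pi$ error of Lemma~\ref{propone}, into a combination of $\tilde\pi_{1_{C_B(d)}\otimes\lambda_j}$ over the fields $F_{d,j}:=L_{dm}^{B(d)\times H_j}$, each of degree $\prod_{\ell\mid d}(\ell+1)\cdot O_{M_E}(1)$ over $\Q$.

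Finally, since $\lambda_j$ is constant on the finitely many cosets of $\ker\lambda_j$ in $H_j$, one writes $1_{C_B(d)}\otimes\lambda_j=\sum_{\bar t}\lambda_j(\bar t)\,1_{C_B(d)\times[\bar t]}$ with $[\bar t]$ running over those cosets; each $C_B(d)\times[\bar t]$ is a union of conjugacy classes of $B(d)\times H_j$ (a linear character being a class function) and is stable under left multiplication by $U(d)\times\ker\lambda_j$ (unipotent multiplication again preserving trace and determinant), so Theorem~\ref{theoremartin} applies with $G=B(d)\times H_j$, $H=U(d)\times\ker\lambda_j$ --- abelian quotient $T(d)\times(H_j/\ker\lambda_j)$, so that Artin's conjecture is automatic and the $\theta$-hypothesis is again inherited from $\zeta_{L_{dm}}$ --- and $D=C_B(d)\times[\bar t]$. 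Summing the resulting estimates, the main terms collapse to $\langle 1_{C(dm)},1\rangle\,\Li(x)=\tfrac{|C(m)|}{|G(m)|}\prod_{\ell\mid d}\tfrac{\ell^2-2}{(\ell-1)(\ell^2-1)}\,\Li(x)$ (using $|C(\ell)|=\ell(\ell^2-2)$), while the errors $(|D|/|H|)^{1/2}x^\theta\,[F_{d,j}:\Q]\log\!\big(M(L_{dm}/F_{d,j})x\big)$ of Theorem~\ref{theoremartin}, in which $|D|/|H|=|C_B(d)|/|U(d)|=\prod_{\ell\mid d}(2\ell-3)$ and $[F_{d,j}:\Q]\ll_{M_E}\prod_{\ell\mid d}(\ell+1)$, multiplied by the $O_{M_E}(1)$ scalars coming from the two inductions and summed over $j$ and $\bar t$, work out to $O_E\!\big(d^{3/2}x^\theta\log(dx)\big)$ (the per-prime factor being $\ll\ell^{3/2}$); the Lemma~\ref{propone} error is $\ll_E d^{1+o(1)}x^{1/2}$ and is absorbed since $\theta\ge 1/2$.

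The main obstacle is uniformity in $d$: to obtain the exponent $3/2$ and not something larger, one cannot simply Brauer-decompose $\mathrm{GL}_2(\Z/d\Z)$, which would create $\asymp d^{O(1)}$ induced pieces living on subfields of unbounded degree; instead one must exploit the single explicit Borel induction peculiar to $\mathrm{GL}_2$ together with the square-root saving of Theorem~\ref{theoremartin}, which disposes of the $\asymp d^{2}$ torus characters at one go. The non-abelian factor $G(m)$ may be Brauer-decomposed freely precisely because $m\mid M_E$ is bounded, so the constants it contributes are harmless. The one point requiring genuine (if routine) verification is that $C_B(d)$ --- and in part~(ii) $C_B(\ell^2)$ --- is indeed a union of cosets of $U(d)$ (resp. $U(\ell^2)$), i.e. that multiplying an upper-triangular matrix by a unipotent one changes neither its trace nor its determinant; this is what licenses the choice of the normal subgroup $H$ in Theorem~\ref{theoremartin}.
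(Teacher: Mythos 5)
Your proposal follows the same two-step mechanism as the paper — induce to a Borel via Theorem~\ref{theoreminduced}, then apply Theorem~\ref{theoremartin} over the unipotent radical — and your use of Brauer induction to handle the bounded factor $G(m)$ is a legitimate way to fill in the detail the paper leaves implicit. (It is, incidentally, heavier machinery than necessary: since $C(m)$ is a union of $O_E(1)$ conjugacy classes $[g_i]$, one can instead treat each $[g_i]\times C(d)$ directly by inducing to $\langle g_i\rangle\times\prod_{\ell\mid d}B(\ell)$ and taking the normal subgroup $\{1\}\times\prod U(\ell)$, whose quotient $\langle g_i\rangle\times\prod T(\ell)$ is already abelian — no Brauer needed. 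But your route also works.)

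However, there are two genuine gaps, and both are inherited from the paper's own terse argument via \eqref{firstEQ}. First, the identity you invoke, $1_{C(d)}=\tfrac{|B(d)||C(d)|}{|G(d)||C_B(d)|}\,\mathrm{Ind}_{B(d)}^{G(d)}1_{C_B(d)}$, is false. The class function $\mathrm{Ind}_{B(\ell)}^{G(\ell)}1_{C_B(\ell)}$ evaluated at $g\in C(\ell)$ equals the number of lines in $\F_\ell^2$ fixed by $g$, which is $\ell+1$ at the identity, $1$ on the unipotent class, and $2$ on the split semisimple classes; it is therefore not proportional to $1_{C(\ell)}$, and the constant $\tfrac{|B||C|}{|G||C_B|}$ does not reproduce any of these values. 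The main term in Theorem~\ref{CDT-improved} still comes out right because $|C_B(\ell)|$ cancels formally, but a correct derivation of the $d^{3/2}$-error must decompose $1_{C(\ell)}$ exactly (for instance as $\tfrac12\bigl(\mathrm{Ind}_B^G 1_{C_B}-(\ell-1)1_{\{I\}}+1_{C_{\mathrm{unip}}}\bigr)$) and estimate the two ``small'' classes separately — they have $|C_i|\ll\ell^2$ and contribute only $O(\ell\, x^\theta\log)$ apiece.

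Second, and more seriously for part~(ii): not every element of $C(\ell^2)$ is conjugate into $B(\ell^2)$. Any $g=I+\ell H$ with $H\in M_2(\F_\ell)$ automatically satisfies $\det g+1-\tr g\equiv 0\ (\mathrm{mod}\ \ell^2)$, so the whole kernel $K=\ker\bigl(\mathrm{GL}_2(\Z/\ell^2\Z)\to\mathrm{GL}_2(\F_\ell)\bigr)$ lies in $C(\ell^2)$; but if $H$ has irreducible characteristic polynomial mod $\ell$, then $g$ stabilizes no rank-one direct summand of $(\Z/\ell^2\Z)^2$ and is not conjugate to an upper-triangular matrix. For these classes $C_i\cap B(\ell^2)=\emptyset$, Theorem~\ref{theoreminduced} does not apply, and $\mathrm{Ind}_{B(\ell^2)}^{G(\ell^2)}1_{C_B(\ell^2)}$ vanishes identically on them even though $1_{C(\ell^2)}$ does not. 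The fix is to split off $K$ and observe that $\pi_{K}(x,L_{\ell^2}/\Q)=\pi_{\{1\}}(x,L_\ell/\Q)$, which has error $O(x^\theta\log(\ell x))$ by a single-class application of the Chebotarev bound; on the complement $C(\ell^2)\setminus K$, every element does preserve a primitive line (the Smith normal form of $g-I$ over $\Z/\ell^2\Z$ is $\mathrm{diag}(1,0)$ once $\bar g\ne I$), and your Borel argument then goes through. Without this separation the argument, like \eqref{firstEQ} in the paper, asserts a false identity and the error analysis is unsupported.
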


\begin{proof}
The proof of (i) with $d = \ell$ and $m=1$ follows directly by replacing (\ref{secondEQ}) in
(\ref{firstEQ}), and the general case of (i) follows by applying the
same reasoning as above to the extension $L_{dm}/\Q$
with Galois group $G(dm) \simeq G(m) \times \prod_{\ell \mid d}
\mbox{GL}_2(\F_\ell)$. The proof of (ii) follows similarly using $G(\ell^2) \simeq \mbox{GL}_2(\Z / \ell^2
\Z)$.
\end{proof}

\vskip 10mm

\section{Greaves' weighted sieve and proof of Theorem \ref{CTEllipticCurve1}}\label{GreavesLB}

\smallskip

We first recall the simplified version of Greaves' weighted sieve of
dimension 1, i.e. taking $E=V$ and $T=U$ in \cite[Theorem A]{HR85}.

Let ${\mathcal A}$ be a finite sequence of integers and ${\mathcal
P}$ a set of prime numbers. Let ${\mathcal B}={\mathcal
B}({\mathcal P})$ denote the set of all positive square-free
integers supported on the primes of ${\mathcal P}$. For each $d\in
{\mathcal B}$, define
$$
{\mathcal A}_d:=\{a\in {\mathcal A} : a\equiv 0\,({\rm mod}\,d)\}.
$$
We assume that ${\mathcal A}$ is well distributed over arithmetic progressions $0\,({\rm mod}\,d)$
in the following sense:
There are a convenient approximation $X$ to $|{\mathcal A}|$
and a multiplicative function $w(d)$ on ${\mathcal B}$ verifying
$$
0\le w(p)<p
\qquad(p\in {\mathcal P})
\leqno(A_0)
$$
such that
\par
(i) the ``remainders''
\begin{equation}\label{defrAd}
r({\mathcal A}, d)
:= |{\mathcal A}_d| - \frac{w(d)}{d}X
\qquad(d\in {\mathcal B})
\end{equation}
are small on a average over the divisors $d$ of
\begin{equation}\label{defPz}
P(z)
:= \prod_{p<z, \, p\in {\mathcal P}} p;
\end{equation}

\par

(ii)
there exists a constant $A\ge 1$ such that
$$
\bigg|\sum_{\substack{z_1\le p<z_2\\ p\in {\mathcal P}}} \frac{w(d)}{d}\log p - \log\frac{z_2}{z_1}\bigg|
\le A
\qquad
(2\le z_1\le z_2).
\leqno(\Omega_1)
$$

\smallskip

Let $U$ and $V$ be two constants verifying
\begin{equation}\label{GreavesCond1}
V_0\le V\le {\textstyle \frac{1}{4}},
\qquad
{\textstyle \frac{1}{2}}\le U<1,
\qquad
U+3V\ge 1,
\end{equation}
where $V_0 := 0.074368\cdots$.
The simplified version of Greaves' weighted sieve function is given by
\begin{equation}\label{defH}
H({\mathcal A}, D^V, D^U)
:= \sum_{a\in {\mathcal A}} {\mathscr G}\big((a,P(D^U))\big),
\end{equation}
where $D\ge 2$ is the basic parameter of considered problem,
$$
{\mathscr G}(n) := \Big\{1 - \sum_{p\mid n, \, p\in {\mathcal P}} \big(1 - {\mathscr W}(p)\big)\Big\}^+,
$$
with $\big(\{x\}^+ := \max\{0, \, x\}\big)$,
and
$${\mathscr W}(p) := \begin{cases}
\displaystyle \frac{1}{U-V} \bigg(\frac{\log p}{\log D}-V\bigg)
& \text{if $D^{V}\le p<D^U$},
\\\noalign{\smallskip}
0
& \text{otherwise}.
\end{cases}$$
It is clear that
\begin{align*}
H({\mathcal A}, D^V, D^U) & = \sum_{\substack{a\in {\mathcal A}\\
(a, P(D^V))=1}} {\mathscr G}\big((a,P(D^U))\big)
\\
& \ge \sum_{\substack{a\in {\mathcal A}\\ (a, P(D^V))=1}}
\Big\{1 - \sum_{p\mid (a,P(D^U))} \big(1 - {\mathscr W}(p)\big)\Big\}
\\
& = \sum_{\substack{a\in {\mathcal A}\\ (a, P(D^V))=1}}
\bigg\{1 - \frac{U}{U-V}\sum_{\substack{D^V\le p<D^U\\ p\mid a}}
\bigg(1 - \frac{1}{U}\frac{\log p}{\log D}\bigg)\bigg\}.
\end{align*}
The last quantity is the sum of weights of Richert's logarithmic
weighted sieve \cite[Chapter 9, (1.2)]{HR74}. Therefore, Greaves'
weighted sieve is always better than Richert's sieve. It is worth
pointing out that Richert's logarithmic weighted sieve would have been
sufficient for our propose. In fact, for our choice of parameters,
these two sieves coincide in the main term (comparing \cite[Lemma
9.1]{HR74} and (\ref{GWeight}) below). The greatest advantage of
Greaves' weighted sieve is the bilinear form error term.
In many applications (for example $P_2$ in short intervals, \cite{Wu92}),
this advantage allows to take a larger level of distribution $D$
to obtain better results. The actual version of Chebotarev
Density Theorem does not allows us to profit of this advantage for
our problem.

As usual, let $\Omega(n)$ and $\omega(n)$ denote the number of prime
factors of $n$ counted with and without multiplicity, respectively.
Define
$$\omega(a,z)
:=\omega(a) +\sum_{\substack{p\ge z,\,\nu\ge 2\\ p^\nu\mid a}} 1,$$
where the sum is taken over all pairs of primes $p \geq z$ and
integers $\nu \geq 2$ such that $p^\nu$ divides $a$.

The function $H$ will be used to detect the integers in ${\mathcal
A}$ having few of prime factors in the following way.

\begin{lemma}\label{Greaves1}
Let $E$ be an elliptic curve over $\Q$ and let
\begin{align*}
{\mathscr A}
& :=\{|E(\F_p)| : p\le x, \; (|E(\F_p)|, M_E)=1\},
\\\noalign{\vskip 1mm}
{\mathscr P}
& :=\{p : p\nmid M_E\}.
\end{align*}
If there are real positive constants $U, V, \xi, x_0(E), B$ and
positive integer $r$ such that (\ref{GreavesCond1}) holds and for
$x\ge x_0(E)$,
\begin{align}
\max_{a\in {\mathscr A}} a
& \le D^{rU+V},
\label{GreavesCond2}
\\\noalign{\vskip 2,5mm}
\sum_{D^V\le p<D^U} |{\mathscr A}_{p^2}|
& \ll_E \frac{x}{(\log x)^3},
\label{GreavesCond3}
\\
H\big({\mathscr A}, D^V, D^U\big)
& \geq B \frac{C_E^{\rm twin} x}{(\log x)^2},
\label{GreavesCond4}
\end{align}
where $D:=x^{\xi}$,
then we have
\begin{equation}\label{GreavesR1}
\big|\big\{p\le  x : (|E(\F_p)|, M_E)=1, \; \Omega(|E(\F_p)|)\leq r\big\}\big| \geq \{B + o(1)\}
\frac{C_E^{\rm twin}x}{(\log x)^2}
\end{equation}
for $x\ge x_0(E)$.
\end{lemma}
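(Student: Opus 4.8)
The plan is to run the standard Richert--Greaves weighted‑sieve detection mechanism: show that the weight $\mathscr{G}\big((a,P(D^U))\big)$ vanishes on every $a\in\mathscr{A}$ with $\Omega(a)>r$ apart from a negligible exceptional set controlled by (\ref{GreavesCond3}), and then read off the conclusion from the lower bound (\ref{GreavesCond4}) together with the trivial inequality $0\le\mathscr{G}\le 1$. The first step is to restrict the support of $H$. If a prime $p\in\mathscr{P}$ with $p<D^V$ divides $a$, then $\mathscr{W}(p)=0$, so the corresponding term $1-\mathscr{W}(p)=1$ already makes $\mathscr{G}\big((a,P(D^U))\big)=0$; hence by (\ref{defH}) the sum $H(\mathscr{A},D^V,D^U)$ is supported on those $a\in\mathscr{A}$ with $(a,P(D^V))=1$. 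Here is where sieving (\ref{Amodified}) rather than (\ref{A}) pays off: since $(a,M_E)=1$ for every $a\in\mathscr{A}$, all prime divisors of $a$ lie in $\mathscr{P}$, so $(a,P(D^V))=1$ says exactly that $a$ has no prime factor below $D^V$. Moreover each $\mathscr{W}(p)$ occurring in $\mathscr{G}\big((a,P(D^U))\big)$ lies in $[0,1)$, so the sum defining $\mathscr{G}$ has nonnegative terms and $0\le\mathscr{G}(n)\le 1$. Writing $N(x)$ for the number of primes $p\le x$ with $(|E(\F_p)|,M_E)=1$ and $\mathscr{G}\big((|E(\F_p)|,P(D^U))\big)>0$, we get
$$
N(x)\ \ge\ \sum_{a\in\mathscr{A}}\mathscr{G}\big((a,P(D^U))\big)\ =\ H(\mathscr{A},D^V,D^U)\ \ge\ B\,\frac{C_E^{\rm twin}x}{(\log x)^2}
$$
by (\ref{defH}) and (\ref{GreavesCond4}).

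The combinatorial core is the claim: \emph{if $a\in\mathscr{A}$ satisfies $\mathscr{G}\big((a,P(D^U))\big)>0$ and $a\le D^{rU+V}$, and if $a$ is divisible by no square $p^{2}$ with $p$ prime and $D^V\le p<D^U$, then $\Omega(a)\le r$.} To prove this, recall that $\mathscr{G}\big((a,P(D^U))\big)>0$ forces $a$ to have no prime factor below $D^V$. List the prime factors of $a$ in $[D^V,D^U)$ as $p_1<\cdots<p_k$ (each simple, by the square‑free hypothesis), put $t_i:=\log p_i/\log D\in[V,U)$, and let $\ell$ be the number of prime factors of $a$ that are $\ge D^U$, counted with multiplicity; then $\Omega(a)=k+\ell$ and $\log a\ge\big(\sum_{i\le k}t_i+\ell U\big)\log D$. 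Since $\mathscr{W}(p_i)=(t_i-V)/(U-V)$, the inequality $\mathscr{G}\big((a,P(D^U))\big)>0$ rearranges to $\sum_{i\le k}(U-t_i)<U-V$, i.e. $\sum_{i\le k}t_i>(k-1)U+V$ (trivially true when $k=0$, as $U>V$). Combining with $\log a\le(rU+V)\log D$ gives $(k-1)U+V+\ell U<rU+V$, hence $(k-1+\ell)U<rU$, and since $k,\ell,r$ are integers, $\Omega(a)=k+\ell\le r$.

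To finish, set $\mathcal{E}:=\{a\in\mathscr{A}:\ p^{2}\mid a\ \text{for some prime }p\in[D^V,D^U)\}$; then $|\mathcal{E}|\le\sum_{D^V\le p<D^U}|\mathscr{A}_{p^2}|\ll_E x/(\log x)^3$ by (\ref{GreavesCond3}). By (\ref{GreavesCond2}) every $a\in\mathscr{A}$ satisfies $a\le D^{rU+V}$, so the claim applies to each $a\in\mathscr{A}\setminus\mathcal{E}$ with $\mathscr{G}\big((a,P(D^U))\big)>0$, giving $\Omega(a)\le r$. Therefore
$$
\big|\{p\le x:(|E(\F_p)|,M_E)=1,\ \Omega(|E(\F_p)|)\le r\}\big|\ \ge\ N(x)-|\mathcal{E}|\ \ge\ B\,\frac{C_E^{\rm twin}x}{(\log x)^2}+O_E\!\Big(\frac{x}{(\log x)^3}\Big),
$$
which is $\{B+o(1)\}\,C_E^{\rm twin}x/(\log x)^2$ since $C_E^{\rm twin}>0$, as required.

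This lemma is the routine ``detection'' half of the weighted sieve, so the only point I expect to need care is the bookkeeping with higher prime powers: the repeated prime factors $\ge D^U$ of $a$ must be counted with their full multiplicity in $\Omega(a)$, and this is exactly what the term $\ell U\log D$ in the lower bound for $\log a$ records; likewise the passage from $\mathscr{A}$ to $\mathscr{A}\setminus\mathcal{E}$ is legitimate only because (\ref{GreavesCond3}) makes prime squares in the sifting range $[D^V,D^U)$ negligible. No genuine analytic input enters here — only $0\le\mathscr{G}\le 1$ and the explicit shape of $\mathscr{G}$ — and in particular Greaves' bilinear error term is not used; all the real work is displaced into the hypotheses (\ref{GreavesCond3}) and (\ref{GreavesCond4}), to be verified later from the effective Chebotarev estimates of Section \ref{CDT} and a Richert‑type main‑term computation.
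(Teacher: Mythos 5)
Your proof is correct and follows essentially the same route as the paper: use $0\le\mathscr{G}\le 1$ and hypothesis (\ref{GreavesCond4}) to lower-bound the number of $a\in\mathscr{A}$ with $\mathscr{G}\big((a,P(D^U))\big)>0$, then show that for such $a$ the size bound (\ref{GreavesCond2}) forces few prime factors, and finally discard the squares in $[D^V,D^U)$ via (\ref{GreavesCond3}). The only cosmetic difference is bookkeeping order: the paper works with the auxiliary count $\omega(a,D^U)$ and shows $\omega(a,D^U)\le r$ for \emph{all} $a$ in the support, then notes $\omega(a,D^U)=\Omega(a)$ off the exceptional set, whereas you excise the exceptional set first and prove $\Omega(a)\le r$ directly; the two are equivalent.
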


\begin{proof}
Since ${\mathscr W}(p)\le 1$, we have $0\le {\mathscr G}(n)\le 1$ for all $n\in \N$.
Thus hypothesis (\ref{GreavesCond4}) allows us to write
\begin{equation}\label{GreavesCond5}
\begin{aligned}
\sum_{\substack{a\in {\mathscr A}\\ (a, P(D^V))=1, \, {\mathscr G}((a, P(D^U)))>0}} 1
& \ge \sum_{\substack{a\in {\mathscr A}\\ (a, P(D^V))=1}} {\mathscr G}\big((a, P(D^U))\big)
\\
& = H\big({\mathscr A}, D^V, D^U\big)
\\\noalign{\vskip 1,5mm}
& \ge B \frac{C_E^{\rm twin} x}{(\log x)^2}
\end{aligned}\end{equation}
for $x\ge x_0(E)$.

For each $a\in {\mathscr A}$ counted in the sum on the left-hand side of (\ref{GreavesCond5}),
we have $(a, P(D^V))=1$ and
\begin{align*}
0
& < \Big\{1 - \sum_{p\mid a, \; p<D^U} \big(1 - {\mathscr
W}(p)\big)\Big\}^+
\\
& = 1-\frac{1}{U-V}\sum_{p\mid a, \; p<D^U}\bigg(U-\frac{\log p}{\log D}\bigg).
\end{align*}
This and hypothesis (\ref{GreavesCond2}) imply
\begin{align*}
0
& < U-V-\sum_{p\mid a, \, p<D^U} \bigg(U-\frac{\log p}{\log D}\bigg)
\\
& \le U-V
-\sum_{p\mid a} \bigg(U-\frac{\log p}{\log D}\bigg)
-\sum_{p\ge D^U, \; \nu\ge 2, \; p^\nu\mid a} \bigg(U-\frac{\log p}{\log D}\bigg)
\\\noalign{\vskip -1mm}
& \le U-V-U\omega(a, D^U)+\frac{\log a}{\log D}
\\\noalign{\vskip 2mm}
& \le U-V-U\omega(a, D^U) + rU + V
\\\noalign{\vskip 3mm}
& = U\big(r+1-\omega(a, D^U)\big).
\end{align*}
Hence for such $a$ we have $(a, P(D^V))=1$ and $\omega(a, D^U) \leq r$.
Combining this with (\ref{GreavesCond5}), we obtain
\begin{equation}\label{GreavesR2}
\begin{aligned}
& \big|\big\{p\le  x : (|E(\F_p)|, M_E)=(|E(\F_p)|, P(D^V))=1, \;
\omega\big(|E(\F_p)|, D^U\big)\le r\big\}\big|
\\
& \geq B\frac{C_E^{\rm twin} x}{(\log x)^2}.
\end{aligned}
\end{equation}
When $(|E(\F_p)|, P(D^V))=1$,
we have $\omega\big(|E(\F_p)|, D^U\big) =
\Omega(|E(\F_p)|)$ unless $a$ is divisible by the square of a prime
$p$ such that $D^V \leq p < D^U$ and the required result
(\ref{GreavesR1}) now follows from (\ref{GreavesR2}) and hypothesis
(\ref{GreavesCond3}).
\end{proof}

\goodbreak
\smallskip

Now we are ready to prove Theorem \ref{CTEllipticCurve1}.

In Lemma \ref{Greaves1}, take

$$
r = r(\theta) := \bigg[\frac{18+2\theta}{5(1-\theta)}\bigg] + 1,
\quad
\xi=\frac{2(1-\theta)}{5} (1 - \varepsilon),
\quad
U=\frac{5}{8},
\quad
V=\frac{1}{4}
$$
where $\varepsilon$ is an arbitrary small positive number. It is
easy to see that condition (\ref{GreavesCond1}) is satisfied. In
order to verify (\ref{GreavesCond2}), it is sufficient to show that
$\xi (r U + V)>1$, since $a_p$ satisfies Hasse's bound
$|a_p|<2\sqrt{p}$. In view of the fact that $\varepsilon$ is
arbitrarily small, we have
$$
\xi (r U + V)
> \frac{2(1-\theta)}{5} (1 - \varepsilon)
\bigg(\bigg(\frac{18+2\theta}{5(1-\theta)} + \frac{8}{1-\theta}\varepsilon\bigg) \frac{5}{8} + \frac{1}{4}\bigg)
= 1 + \varepsilon.$$
It remains to verify (\ref{GreavesCond3}) and (\ref{GreavesCond4}).

First Theorem \ref{CDT-improved}(ii) allows us to deduce
\begin{align*}
\sum_{D^V\le p<D^U} |{\mathscr A}_{p^2}|
& \ll \sum_{D^V\le p<D^U} \bigg(\frac{x}{p^2\log x} + p^3 x^{\theta} \log x\bigg)
\\
& \ll D^{-V} x + D^{4U} x^{\theta}
\\\noalign{\vskip 2mm}
& \ll x^{1-\varepsilon (1-\theta)}
\end{align*}
for all $x\ge 3$.
This shows that (\ref{GreavesCond3}) is satisfied.

For $d$ square-free with $(d, M_E)= 1$, we can write
\begin{align*}
\left| \mathscr{A}_d \right|
& = \sum_{\substack{p \leq x\\ (|E(\F_p)|, M_E) = 1\\ |E(\F_p)|\equiv 0 \hskip -3mm \pmod d}} 1
\\
& = \sum_{m \mid M_E} \mu(m)
\sum_{\substack{p \leq x\\ |E(\F_p)|\equiv 0 \hskip -3mm \pmod d\\ |E(\F_p)|\equiv 0 \hskip -3mm \pmod m}} 1
\\
& = \sum_{m \mid M_E} \mu(m)
\sum_{\substack{p \leq x\\ |E(\F_p)|\equiv 0 \hskip -3mm \pmod {dm}}} 1
\\
& = \sum_{m \mid M_E} \mu(m) \pi_{C(dm)}(x, L_{dm}/\Q).
\end{align*}
Using Theorem \ref{CDT-improved}(i) and (\ref{ProbME}), we get that
\begin{align*}
\left| \mathscr{A}_d \right|
& = \Li(x) \frac{|C(d)|}{|G(d)|}
\sum_{m \mid M_E} \mu(m) \frac{|C(m)|}{|G(m)|}
+ O_E \left(|C(d)|^{1/2} x^\theta \log(dx)\right)
\\
& = \Li(x) \frac{|C(d)|}{|G(d)|} \left(1- \frac{|\Omega(M_E)|}{|G(M_E)|}\right)
+ O_E \left(d^{3/2} x^\theta \log(dx)\right).
\end{align*}
Thus we obtain
\begin{equation}
|{\mathscr A}_d|= \frac{w(d)}{d}X + r({\mathscr A}, d)
\end{equation}
for all $d\in {\mathcal B}({\mathscr P})$, with
$$w(d)
=\frac{d |C(d)|}{|G(d)|} = \prod_{\ell \mid d} \frac{\ell
(\ell^2-2)}{(\ell-1) (\ell^2-1)}
$$ and
$$ X:=\bigg(1-\frac{|\Omega(M_E)|}{|G(M_E)|}\bigg)\Li(x),
\qquad
|r({\mathscr A}, d)| \ll_E d^{3/2}x^{\theta}\log(dx).
$$
Since
$$
w(\ell) = 1 + \frac{\ell^2-\ell-1}{(\ell - 1)(\ell^2-1)},
$$
conditions $(A_0)$ and $(\Omega_1)$ are satisfied.
Thus Theorem A of \cite{HR85} is applicable. Denoting by $\gamma$
the Euler constant and defining
$$
V(D):= \prod_{\substack{p<D\\ p \in \mathscr{P}}}
\bigg(1-\frac{w(p)}{p}\bigg),
$$
Theorem A of \cite{HR85} allows us to write
\begin{equation}\label{GWeight}
\begin{aligned}
H\big({\mathscr A}, D^V, D^U\big)
& \ge \frac{2e^{\gamma}XV(D)}{U-V}
\bigg\{U\log\frac{1}{U}
+(1-U)\log\frac{1}{1-U}
-\log\frac{4}{3}
\\\noalign{\vskip 1mm}
& \quad
+ \alpha(V)-V\log 3-V\beta(V)
+ O\bigg(\frac{\log_3 D}{(\log_2D)^{1/5}}\bigg)\bigg\}
\\\noalign{\vskip 1mm}
& \quad
- (\log D)^{1/3}\Big|
\mathop{{\sum_{m<M}}\,{\sum_{n<N}}}_{mn\mid P(D^U)}
\alpha_m \beta_n r({\mathscr A},mn)\Big|
\end{aligned}
\end{equation}
where $M$ and $N$ are any two real numbers satisfying
$$M>D^U,
\qquad
N>1,
\qquad
MN=D$$
and $\alpha_m, \beta_n$ are certain real numbers satisfying $|\alpha_m|\le 1, |\beta_n|\le 1$.
The functions $\alpha(V)$ and $\beta(V)$ are given by
\begin{align*}
\alpha(V)
& := \log\frac{1-V}{(3/4)}
- \int_4^{1/V}\bigg(\frac{2}{u}\log(2-uV)+\log\frac{1-1/u}{1-V}\bigg) \frac{\log(u-3)}{u-2} \d u,
\\\noalign{\vskip 2mm}
\beta(V)
& := \log\frac{1-V}{3V}
- \int_4^{1/V}\bigg(\log(2-uV)+\log\frac{1-1/u}{1-V}\bigg)
\frac{\log(u-3)}{u-2} \d u,
\end{align*}
for $\frac{1}{6}\le V\le \frac{1}{4}$.

By using the prime number theorem, it follows that
\begin{equation}\label{V1}
\begin{aligned}
V(D) & = \prod_{\ell\le D, \, \ell\nmid M_E}
\bigg(1-\frac{|C(\ell)|}{|G(\ell)|}\bigg)
\\
& = \prod_{\ell\le D, \, \ell\nmid M_E} \bigg(1-\frac{1}{\ell}\bigg)
\prod_{\ell\le D, \, \ell \nmid M_E}
\bigg(1-\frac{|C(\ell)|}{|G(\ell)|}\bigg)\bigg(1-\frac{1}{\ell}\bigg)^{-1}
\\
& = \prod_{\ell \le D} \bigg(1-\frac{1}{\ell}\bigg) \prod_{\ell\mid
M_E} \bigg(1-\frac{1}{\ell}\bigg)^{-1} \prod_{\ell\le D, \,
\ell\nmid M_E}
\bigg(1-\frac{|C(\ell)|}{|G(\ell)|}\bigg)\bigg(1-\frac{1}{\ell}\bigg)^{-1}
\\
& \sim \frac{e^{-\gamma}}{\log D}
\prod_{\ell\mid M_E} \bigg(1-\frac{1}{\ell}\bigg)^{-1}
\prod_{\ell\nmid M_E} \bigg(1-\frac{\ell^2-\ell-1}{(\ell-1)^3(\ell+1)}\bigg)
\end{aligned}
\end{equation}
as $D\to\infty$.

On the other hand,
denoting by $\mu(d)$ the M\"obius function,
Theorem \ref{CDT-improved}(ii) implies that
\begin{equation}\label{Error}
\begin{aligned}
\Big|\mathop{{\sum_{m<M}}\,{\sum_{n<N}}}_{mn\mid P(D^U)}
\alpha_m \beta_n r({\mathscr A},mn)\Big|
& \le \sum_{d\le D}\mu(d)^23^{\omega(d)} d^{3/2} x^{\theta} \log(dx)
\\\noalign{\vskip -3mm}
& \ll x^{\theta} D^{5/2+\varepsilon (1-\theta)/2}
\\\noalign{\vskip 3mm}
& \ll x^{1-\varepsilon (1-\theta)/2}
\end{aligned}
\end{equation}
since $D=x^{\xi}$ with $\xi=\frac{2}{5}(1-\theta)(1-\varepsilon)$.

Combining (\ref{GWeight}), (\ref{V1}), (\ref{Error}) and (\ref{defCEtrwin}), we can find that
$$
H\big({\mathscr A}, D^V, D^U\big)
\ge \big\{2J(\xi, U, V)+o(1)\big\}\frac{C_E^{\rm twin}x}{(\log x)^2}
$$
with
\begin{align*}
J(\xi, U, V)
& := \frac{\alpha(V)-V\beta(V)-V\log 3-U\log U
-(1-U)\log(1-U)
-\log(4/3)}{\xi (U-V)}.
\end{align*}
Since $J(\xi, U, V)$ is continuous in $(\xi, U)$ and $\alpha(\frac{1}{4})=\beta(\frac{1}{4})=0$,
a simple numerical computation shows that
\begin{align*}
2J(\xi, U, V)
& = 2J\bigg(\frac{2(1-\theta)}{5}, \, \frac{5}{8}, \, \frac{1}{4}\bigg)+O(\varepsilon)
\\
& = \frac{1.32304\cdots}{1-\theta} + O(\varepsilon).
\end{align*}
This implies
$$
H\big({\mathscr A}, D^V, D^U\big)
\ge \frac{1.32303}{1-\theta}\frac{C_E^{\rm twin}x}{(\log x)^2}
$$
for $x\ge x_0(E)$. This completes the proof of Theorem
\ref{CTEllipticCurve1}. \qed

\section{Selberg's linear sieve and proof of Theorem \ref{Upperbound}}

We use the notation of Section \ref{GreavesLB}. As usual, we write
the sieve function
$$
S({\mathscr A}, {\mathscr P}, z)
:= |\{a\in {\mathscr A} : p\mid a \;\, {\rm and}\;\,  p\in {\mathscr P}\,\Rightarrow\,p>z\}|.
$$
Then in view of (\ref{Compare}), we can write the following trivial inequality
\begin{equation}\label{UB1}
\pi_E^{\rm twin}(x) \le S({\mathscr A}, {\mathscr P}, D^{1/2}) +
O(D^{1/2})
\end{equation}
for all $x\ge 1$, where $D=x^{\xi}$ with $\xi=2(1-\theta)(1 -
\varepsilon)/5$ as before, and the $O$-constant depends on the curve $E$.

Using Selberg's linear sieve \cite[Theorem 8.3]{HR74} with $q=1$, it
follows that
$$
S({\mathscr A}, {\mathscr P}, D^{1/2})
\le XV(D^{1/2}) \{F(2)+o(1)\} + {\mathscr R},
$$
where
\begin{align*}
{\mathscr R} & := \sum_{\substack{d<D\\ d\mid P(D^{1/2})}}
3^{\omega(d)} |r({\mathscr A}, d)|
\\
& \ll  \sum_{d<D} \mu(d)^2 3^{\omega(d)} d^{3/2} x^\theta\log(dx)
\\\noalign{\vskip 2mm}
& \ll x^{1-\varepsilon(1-\theta)/2},
\end{align*}
using Theorem \ref{CDT-improved}(i).

Since $F(2)=e^\gamma$,
replacing  $D$ by $D^{1/2}$ in the asymptotic formula (\ref{V1}), we
get that
\begin{align*}
XV(D^{1/2}) F(2)
& = C_E^{\rm twin} \frac{2 \Li(x)}{\log{x^\xi}} \left\{1 + o(1) \right\}
\\
& \le \bigg(\frac{5}{1-\theta} +
\varepsilon\bigg) \frac{C_E^{\rm twin}x}{(\log x)^2}
\end{align*}
for $x\ge x_0(E, \varepsilon)$.

Combining these estimates, we find that
\begin{equation}\label{UB2}
S({\mathscr A}, {\mathscr P}, D^{1/2})
\le \bigg(\frac{5}{1-\theta} + \varepsilon\bigg) \frac{C_E^{\rm twin}x}{(\log x)^2}
\end{equation}
for $x\ge x_0(E, \varepsilon)$. Theorem \ref{Upperbound} now follows
from (\ref{UB1}) and (\ref{UB2}). \qed

\vskip 10mm


\begin{thebibliography}{CC}

\bibitem{Bom87}
E. Bombieri,
\textit{Le grand crible dans la th\'eorie analytique des nombres},
Ast\'erisque {\bf 18} (1987), 2nd edition, 103pp.

\bibitem{BCD08}
A. Balog, A. C. Cojocaru \& C. David,
\textit{Average twin prime conjecture for elliptic curves},
preprint. arXiv:0709.1461.

\bibitem{C73}
J.-R. Chen,
\textit{On the representation of a large even integer
as the sum of a prime and the product of at most two primes},
Sci. Sinica {\bf 16} (1973), 157--176.

\bibitem{Co05}
A. C. Cojocaru,
\textit{Reductions of an elliptic curve with almost prime orders},
Acta Arith. {\bf 119} (2005), no. 3, 265--289.

\bibitem{CoDa}
A. C. Cojocaru \& C. David,
\textit{Frobenius fields for Elliptic Curves},
American J. of Math. {\bf 130} (2008), no. 6, 1535--1560.

\bibitem{GR86}
F. Grupp \& H.-E. Richert,
\textit{The functions of linear sieve},
J. Number Theory {\bf 22} (1986),  no. 2, 208--239.

\bibitem{HR74}
H. Halberstam \& H.-E. Richert,
\textit{Sieve Methods},
Academic Press, London 1974.

\bibitem{HR85}
H. Halberstam \& H.-E. Richert,
\textit{A weighted sieve of Greaves' type, II},
in: Elementary and Analytic Theory of Numbers, Banach Center Publication
{\bf 171} (1985), 183--215.

\bibitem{IU07}
H. Iwaniec \& J. Jim\'enez Urroz,
\textit{Orders of CM elliptic curves modulo $p$ with at most two primes},
preprint.

\bibitem{IU08}
J. Jim\'enez Urroz,
\textit{Almost Prime Orders of CM Elliptic Curves Modulo $p$},
in: Lecture Notes in Computer Science  {\bf 5011}, Springer (2008), 74-87.

\bibitem{Ko88}
N. Koblitz,
\textit{Primality of the number of points on an elliptic curve over a finite field},
Pacific J. Math. {\bf 131} (1988), No. 1, 157--165.

\bibitem{LaOd}
J. Lagarias \& A. Odlyzko,
\textit{Effective versions of the Chebotarev Density Theorem},
in: Algebraic Number Fields (A. Fr\"ohlich edit.), New York, Academic Press (1977), 409-464.

\bibitem{MM01}
S.-A. Miri \& V.-K.  Murty,
\textit{ An application of sieve methods to elliptic curves},
in: Progress in cryptology---INDOCRYPT 2001
(Chennai), Lecture Notes in Comput. Sci. {\bf 2247}, Springer
(2001),  91--98.

\bibitem{MMS88}
M.-R. Murty, V.-K. Murty \& N. Saradha,
\textit{Modular forms and the Chebotarev density theorem},
Amer. J. Math. {\bf 110} (1988), 253--281.

\bibitem{Serre72} J.-P. Serre,
\textit{Propri\'et\'es galoisiennes des points d'ordre fini des courbes elliptiques},
Invent. Math. {\bf 15} (1972), 259-331.

\bibitem{Serre81}
J.-P. Serre,
\textit{Quelques applications du th\'eor\`eme de densit\'e de Chebotarev},
Inst. Hautes \'Etudes Sci. Publ. Math. {\bf 54} (1981), 323--401.

\bibitem{Stark74}
H. M. Stark,
\textit{Some effective cases of the Brauer-Siegel theorem},
Invent. Math. {\bf 23}
(1974), 135-152.

\bibitem{SW05}
J. Steuding \& A. Weng,
\textit{On the number of prime divisors of the order of elliptic modulo $p$},
Acta Arith. {\bf 117} (2005),  no. 4, 341--352.

\bibitem{SW05bis}
J. Steuding \& A. Weng,
\textit{Erratum: ``On the number of prime divisors of the order of elliptic curves modulo $p$"}
[Acta Arith. {\bf 117} (2005), no. 4, 341--352; MR2140162],
Acta Arith. {\bf 119} (2005),  no. 4, 407--408.

\bibitem{Wu92}
{\author J. Wu},
\textit{$P_2$ dans les petits intervalles},
in : S\'eminaire de Th\'eorie des nombres, Paris 1989-1990 (D. Sinnou
Ed.), Progress in Math. {\bf 102}, Birkh\"auser (1992), 233--267.

\bibitem{zywina} D. Zywina,
\textit{A refinement of Koblitz's conjecture},
preprint.

\bibitem{zywina-largesieve}
D. Zywina,
\textit{The large sieve and Galois representations},
preprint. arXiv:0812.2222.





\end{thebibliography}
\end{document}